\newtheorem{theorem}{Theorem}
\newtheorem{lemma}[theorem]{Lemma}
\newtheorem{definition}{Definition}
\numberwithin{equation}{section}
\def\Der{\operatorname{Der}}
\def\span{\operatorname{span}}
\newcommand{\p}{\partial}
\newcommand{\C}{\ensuremath{\mathbb C}\xspace}
\renewcommand{\a}{\ensuremath{\alpha}}
\newcommand{\h}{\ensuremath{\mathfrak{h}}}
\newcommand{\Z}{\ensuremath{\mathbb{Z}}\xspace}
\newcommand{\N}{\ensuremath{\mathbb{N}}\xspace}
\newcommand{\V}{\ensuremath{\mathfrak{V}}\xspace}
\newcommand{\W}{\ensuremath{\mathcal{W}}\xspace}
\renewcommand{\phi}{\varphi}
\def\sl{\mathfrak{sl}}
\def\A{\mathcal{A}}
\def\p{\partial}
\begin{document}
\title[$\W_n^+$- and $\W_n$-module structures  on $U(\h_n)$]{$\W_n^+$- and $\W_n$-module structures  on $U(\h_n)$ }
\author{Haijun Tan and Kaiming Zhao}
\maketitle

\begin{abstract} Let $\h_n$ be the Cartan subalgebra of  the  Witt algebras $\W_n^+=\text{Der}
\C[t_1, t_2, ..., t_n]$ and  $\W_n=\text{Der}\C[t_1^{\pm 1},t_2^{\pm
1},\cdots,t_n^{\pm1}]$
 where $1\le n\le \infty$. In this paper, we classify the modules
over  $\W_n^+$ and over $\W_n$ which are free $U(\h_n)$-modules of
rank $1$. These  are the $\W_n^+$-modules
$\Omega(\Lambda_{n},a, S) $ for some
$\Lambda_n=(\lambda_1,\cdots,\lambda_n) \in (\C^*)^n, a\in \C$, and
$S\subset \{1,2,..., n\}$;  and $\W_n$-modules  $\Omega(\Lambda_n,a)$
 for some $\Lambda_n\in (\C^*)^n$ and some $a\in \C.$
\end{abstract}

\vskip 10pt \noindent {\em Keywords:}   Witt algebras, non-weight
modules

\vskip 5pt \noindent {\em 2000  Math. Subj. Class.:} 17B10, 17B20,
17B65, 17B66, 17B68

\vskip 10pt
\section{introduction}

\vskip 5pt We denote by $\mathbb{Z}$, $\mathbb{Z}_+$, $\mathbb{N}$
and $\mathbb{C}$ the sets of  all integers, non-negative integers,
positive integers and complex numbers, respectively.  All vector
spaces and algebras in this paper are over $\C$. We denote by
$U(\mathfrak{a})$ the universal enveloping algebra of the Lie
algebra $\mathfrak{a}$ over $\C$.

For any integer $n : 1\le n\le \infty$, let
$\A_n=\C[t_1^{\pm1},t_2^{\pm1},\cdots, t_n^{\pm1}]$ and
$\A_n^+=\C[t_1,t_2,\cdots, t_n]$ be the polynomial algebras. Then we
have the Lie algebras
 $\W_n=\Der(\A_n)$ and $\W_n^+=\Der(\A_n^+)$. These Lie  algebras are known as the Witt algebras of rank $n$.
 We know that  $\W_1$ is the centerless Virasoro algebra.

The theory of weight Virasoro modules with finite-dimensional weight
spaces is  fairly well-developed. In 1992, O. Mathieu \cite{M}
classified all irreducible modules with finite-dimensional weight
spaces over the Virasoro algebra, proving a conjecture of Kac
\cite{Ka}. More precisely, O. Mathieu proved that irreducible weight
modules with finite-dimensional weight spaces fall into two classes:
(i) highest/lowest weight modules and  (ii) modules of tensor fields
on a circle and their quotients. V. Mazorchuk and K. Zhao \cite{MZ1}
proved that an irreducible weight module over the Virasoro algebra
is either a Harish-Chandra module or a module in which all weight
spaces in the weight lattice are infinite-dimensional. In \cite{CGZ,
CM, LLZ, LZ2, Zh}, some simple weight modules  over the Virasoro
algebra with infinite-weight spaces were constructed. Very recently,
the non-weight simple representation theory of the Virasoro algebra
has made a big progress. A lot of new non-weight  simple modules
were obtained in \cite{BM, GLZ,LGZ, LLZ, LZ1, MW, MZ2, OW, TZ1, TZ2}
by using different methods.

The theory of weight representations over Witt algebras $\W_n$ for $
1<n<\infty$ has also well-developed. In 2013, Y. Billig and V.
Futorny \cite{BF} successfully proved the conjecture given by Eswara
Rao \cite{E} in $2004$. They proved that
 irreducible modules
over $\mathcal{W}_n$ $(1<n<\infty)$ with finite-dimensional weight
 spaces also fall in two classes: (i) modules of the highest weight type and  (ii) modules of tensor
 fields on a torus and their quotients.  Mazorchuk and Zhao \cite{MZ3} gave an explicit description of the support of an arbitrary irreducible weight module. There are also mixed weight modules over $\mathcal{W}_n$ for $n>1$, see, for example
 \cite{HWZ}. Recently, in \cite{TZ3} some examples of   non-weight irreducible  modules over $\mathcal{W}_n$ with $1<n<\infty$
 were given. We have not seen other examples of non-weight irreducible  representations over these Witt algebras.

For the Witt algebras $\W_n^+$, Penkov and Serganova \cite{PS} gave
an explicit description of the support of an arbitrary irreducible
weight module. We have not seen any other studies about irreducible
representations over these Witt algebras.

In this paper, we mainly concern about the non-weight
representations over Witt algebras $\W_n$ and $\W_n^+$ for $ 1\le
n\le \infty$. More precisely, we will classify a class of non-weight
modules which are free $U(\h_n)$-modules of rank $1$, where $\h_n$
is the Cartan subalgebra of $\W_n$  and $\W_n^+$.

\

Firstly, let us recall some definitions and   some old
representations over $\W_n$ and over $\W_n^{+}$. Then construct some
new representations over $\W_n$ and over $\W_n^{+}$.

 Denote by $\Z^{\infty}$ the set of all sequences $k=(k_1,k_2,k_3,\cdots)$ where $k_i\in \Z$ and only a finite number of  $k_i$ can be nonzero. Denote $(\C^*)^{\infty}=\{(c_k)_{k\in\N} : c_k\in\C^*\}$. Let $\Z_{\ge -1}=\{l\in \Z: l\ge -1\}$. For any $n\in \N$, let
  $$\Z^n_{+,i}=\{(k_1,\cdots,k_n)\in \Z^n: k_i\in \Z_{\ge -1} {\text{ and }} k_j\in \Z_+ {\text{ for all }}j\ne i\}, $$ $$\Z^{\infty}_{+,i}=\{(k_1,k_2,k_3,\cdots)\in \Z^{\infty}: k_i\in \Z_{\ge -1} {\text{ and }}  k_j\in \Z_+{\text{ for all }}j\ne i\}.$$

Let $\partial_i=t_i\frac{\partial}{\partial t_i}, i=1,2,\cdots, n.$
Then
$$\W_n=\bigoplus_{i=1}^n\A_n \partial_i.$$
 For $r=(r_1,\cdots,
r_n)\in \Z^n,$ set $t^r=t_1^{r_1}t_2^{r_2}\cdots t_n^{r_n}$. We can
write the Lie brackets in $\W_n$ as follows:
$$[t^r\partial_i,t^s\partial_j]=s_it^{r+s}\partial_j-r_jt^{r+s}\partial_i, \,\,\forall \,\, i,j=1,2,\cdots, n; r,s\in \Z^n.$$
It is known that  $\mathfrak{h}_{n}=\oplus_{i=1}^n\C \partial_i$ is
the
 {\emph {Cartan subalgebra}} of $\mathcal{W}_n$.

Similarly, the Cartan subalgebra of $\W_{\infty}$ is $\h_{\infty}=\oplus_{i=1}^{\infty}\C\p_i$.
For each $n\in \N$, $\W_n$ can be seen as a Lie subalgebra of
$\W_{\infty}$.
Clearly, $\W_n^+$(resp. $\W_{\infty}^{+}$) can be seen as a Lie
subalgebra of $\W_n$(resp. $\W_{\infty}$),
 and $\W_n^+$(resp. $\W_{\infty}^+$) and $\W_n$(resp. $\W_{\infty}$) share the common Cartan subalgebra $\h_{n}$(resp. $\h_{\infty}$).

With respect to the basis $\{\partial_i:i=1,2,...,n\}$, the weight
sets of $\W_n$ and $\W_n^+$ are $\Z^n$ and $\cup_{i=1}^n\Z_{+,i}^n$
respectively. For each $i\in \N$, denote $W_i^+=\text{Der}\C[t_i]$.

 The{\emph{ Virasoro algebra $\V$ } is the universal central extension of $\W_1$. For convenience,
 we denote $t_1^i\partial_1$ by $d_i$ for all $i\in \Z.$ Then
 $$\V=\big(\bigoplus_{i\in \Z}\C d_i\big)\oplus \C C,$$
 and the Lie bracket of $\V$ is defined by
 $$[C,d_i]=0,\ [d_i,d_j]=(j-i)d_{i+j}+\delta_{i,-j}\frac{i^3-i}{12}C,\ i,j\in \Z.$$

 Now we recall and define some representations over $\W_n$ and over $\W_n^+$.

For $\lambda\in \C^*,a\in \C,$ denote by $\Omega(\lambda,a)=\C[x]$
the polynomial associative algebra over $\C$ in indeterminate $x$.
In \cite{LZ1}, a class of $\V$-modules is defined  on
$\Omega(\lambda,a)$ by $$Cf(x)=0, d_if(x)=\lambda^i(x-i(a+1))f(x-i),
\forall i\in \Z, f(x)\in \C[x].$$ From \cite{LZ1} we know that
$\Omega(\lambda,a)$ is irreducible if and only if $a\ne-1.$ The
modules $\Omega(1, a)$ were also studied in \cite{BMZ}.

Since $\W_1^+$ is a Lie subalgebra of $\V$, each $\V$-module
$\Omega(\lambda, a)$ can be seen as a $\W_1^+$-module. We also
denote the corresponding $\W_1^+$-module by $\Omega(\lambda,a)$. It
is easy to see that $\W_1^+$-module $\Omega(\lambda,a)$ is
irreducible if and only if $a\ne -1.$

Now let us define a  class of new $\W_1^+$-modules.

\begin{definition} Let $\lambda\in\C^*,a\in \C$. We   have  a $\W_1^+$-module structure  on $\C[x]$ by
\begin{equation}\begin{split}&d_{-1}\cdot f(x)=\lambda^{-1}f(x+1),\ d_0\cdot f(x)=xf(x),\\
&d_k\cdot
f(x)=\lambda^kf(x-k)(x-k(a+1))\Pi_{i=0}^{k-1}(x+a-i),\end{split}\end{equation}
where $f(x)\in \C[x]$ and $ k\in \N.$ We denote this
$\W_1^+$-module by  $\Gamma(\lambda,a)$.
\end{definition}

 It is tedious but straightforward to verify that the above action makes $\C[x]$ into a $\W_1^+$-module.  Clearly, $\Gamma(\lambda,0)\cong \Gamma(\lambda,-1)$.

In \cite{TZ3}, a class of $\W_n$-modules  were defined as follows,
where $1<n<\infty$. For  any $a\in \C$ and
$\Lambda_n=(\lambda_1,\lambda_2,\cdots,\lambda_n)\in (\C^*)^n$,
denote by $\Omega(\Lambda_n,a)=\C[x_1,x_2,\cdots,$
 $x_n]$ the polynomial  algebra over $\C$ in commuting indeterminates $x_1,x_2,\cdots,
 x_n.$ The action of $\W_n$ on $\Omega(\Lambda_n,a)$ is defined by
\begin{equation}t^k\partial_i \cdot
f(x_1,\cdots,x_n)=\Lambda_n^k(x_i-k_i(a+1))f(x_1-k_1,\cdots,x_n-k_n),\end{equation}
where $k=(k_1,k_2,\cdots,k_n)\in \Z^n,f(x_1,\cdots,x_n)\in \A^n,
\Lambda_n^k=\lambda_1^{k_1}\lambda_2^{k_2}\cdots\lambda_n^{k_n},$
$i=1,2,\cdots,n.$ 

Since $\W_n^+$ is a subalgebra of $\W_n$, $\Omega(\Lambda_n, a)$
can be seen as a $\W_n^+$-module. We also denote the corresponding
$\W_n^+$-module by $\Omega(\Lambda_n,a)$. 

For convenience, we make the following convention
$\Pi_{p=0}^{-1}g(p)=1$ for any function $g(p)$.
 Now we can define the following $\W_n^+$-modules for any $n\in \Z, $ $1<n<\infty$.

\begin{definition} Let $n\in \N,$ $\Lambda_n=(\lambda_1,\cdots,\lambda_n)\in (\C^*)^n, a\in \C$, and $S\subset \{1,2,..., n\}$. For any $k\in \cup_{j=1}^n\Z_{+, j}^n$, we define
$$\phi_{S,a}(k,i)=(x_i-k_i(a+1))\Pi_{q\in S}\Pi_{p=0}^{k_q-1}(x_q+a-p), \,\,{\rm if }\,\, k_i\ne-1\,\, {\rm or}\,\,
 i\notin S$$
$$\phi_{S,a}(k,i)=\Pi_{q\in S\setminus\{i\}}\Pi_{p=0}^{k_q-1}(x_q+a-p),\,\, {\rm if }\,\, k_i=-1\,\, {\rm and } \,\,i\in S.$$
Define the action of $\W_n^+$ on $\C[x_1,\cdots,x_n]$ as follows:
\begin{equation}
t^k\p_i\cdot
f(x_1,\cdots,x_n)=\Lambda_n^kf(x_1-k_1,\cdots,x_n-k_n)\phi_{S,a}(k,i),
\end{equation}
where $k=(k_1,k_2,\cdots,k_n)\in\Z_{+, i}^n, 1\le i\le n$ and
$f(x_1,\cdots,x_n)\in\A^+_n$. We denote this   $\W_n^+$-module by
$\Omega(\Lambda_n,a, S)$.
\end{definition}

 It is tedious but straightforward to verify that the above action makes $\A_n^+$ into a
 $\W_n^+$-module. We remark that when $S$ is empty,
$\Omega(\Lambda_n,a, S)\cong\Omega(\Lambda_n,a)$.
One can easily see that in Definition 2, $n$ can be replaced by
$\infty$.  Thus  we have the $\W_{\infty}^+$-module
$\Omega(\Lambda_\infty,a, S)$.

\

 The present paper is organized as follows.   In section 2, we classify
the modules over $\W_1^+$ which are free $U(\C d_0)$-modules of rank
$1$(Theorem 2). These modules are exactly $\Omega(\lambda,a)$ and
$\Gamma(\lambda,a)$ for some $\lambda\in \C^*, a\in \C$. As a
corollary, we also classify the modules of the Virasoro algebra
which  are free $U(\C d_0)$-modules of rank $1$ (Theorem 3). These
modules  are exactly $\Omega(\lambda,a)$ for some $\lambda\in \C^*$
and $a\in \C.$ We separate the Virasoro algebra case from $\W_n$
because of the center
$\C C$ of $\V$.
In section 3, we classify the modules over $\W_n^+$ which are free
$U(\h_n)$-modules of rank $1$ (Theorem 8). These  are exactly the
$\W_n^+$-modules $\Omega(\Lambda_{n},a, S) $ for some
$\Lambda_n=(\lambda_1,\cdots,\lambda_n)\in (\C^*)^n, a\in \C$, and
$S\subset \{1,2,..., n\}$.  We also classify the $\W_n$-modules
which are free $U(\h_n)$-modules of rank $1$ (Theorem 9). These
modules are $\Omega(\Lambda_n,a)$ for some $\Lambda_n\in (\C^*)^n$
and some $a\in \C.$ These results are also true for $n=\infty$.

\section{$\W_1^+$- and $\W_1$-module structures on   $\C[ d_0]$}

In this section, we will classify the modules over $\W_1^+$ and over
$\W_1$ which are  free
 $\C[ d_0]$-modules of rank $1$.

Let $1\le n\le \infty$ and let $M=U(\h_n)v$ be a $\W^+_n$-module
with left multiplication action of $\h_n$. The following observation
is important.
\begin{lemma} For any $f(\partial_1,\partial_2,...,\partial_n)\in
M$,    $1\le i\le n,$ $k\in \Z^n_{+,i}$, we have
\begin{equation}t^k\partial_i\cdot
f(\partial_1,\partial_2,...,\partial_n)v
=f(\partial_1-k_1,\partial_2-k_2,...,\partial_n-k_n)(t^k\partial_i\cdot
v),
\end{equation} and $t^k\p_i\cdot v\ne0$ for any $k\in \Z^n_{+,i}$.
\end{lemma}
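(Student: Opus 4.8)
The plan is to establish the displayed identity by a short computation in $U(\W_n^+)$, and then to deduce $t^k\partial_i\cdot v\ne 0$ from it by showing that a nonzero operator of the form $t^k\partial_i$ cannot annihilate all of $M$. For the identity, the starting point is the commutation relation in $\W_n^+$: writing $\partial_j=t^0\partial_j$, the bracket formula $[t^r\partial_i,t^s\partial_j]=s_it^{r+s}\partial_j-r_jt^{r+s}\partial_i$ gives $[\partial_j,\,t^k\partial_i]=k_j\,t^k\partial_i$ for every $j$, and hence $(t^k\partial_i)\,\partial_j=(\partial_j-k_j)\,(t^k\partial_i)$ in $U(\W_n^+)$. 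Iterating this over a monomial in $\partial_1,\dots,\partial_n$ and extending by linearity yields the operator identity $(t^k\partial_i)\,f(\partial_1,\dots,\partial_n)=f(\partial_1-k_1,\dots,\partial_n-k_n)\,(t^k\partial_i)$ in $U(\W_n^+)$, valid for every polynomial $f$. Applying both sides to $v$ and using that $\h_n$ acts on $M$ by left multiplication in $U(\h_n)$ — so that $f(\partial_1-k_1,\dots,\partial_n-k_n)$ applied to the element $t^k\partial_i\cdot v$ of $M$ is just left multiplication in $U(\h_n)$ — gives precisely the asserted formula. This half is pure PBW bookkeeping.

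For the non-vanishing, suppose for contradiction that $t^k\partial_i\cdot v=0$ for some $k\in\Z^n_{+,i}$. The identity just proved then forces $t^k\partial_i\cdot(f(\partial_1,\dots,\partial_n)v)=0$ for every $f$, i.e.\ $t^k\partial_i$ annihilates all of $M$. Next I would note that $J:=\{X\in\W_n^+: X\cdot M=0\}$ is an ideal of $\W_n^+$: for $X\in J$, $Y\in\W_n^+$ and $m\in M$ one has $[Y,X]\cdot m=Y\cdot(X\cdot m)-X\cdot(Y\cdot m)=0$, since $Y\cdot m\in M$. So $J$ is a nonzero ideal, and it suffices to show $\partial_i\in J$: for then $\partial_i\cdot v=\partial_i v\ne 0$ (because $M$ is free over $U(\h_n)$ on the generator $v$), which is absurd.

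To produce $\partial_i$ inside $J$, I would lower the exponent vector of $t^k\partial_i$ step by step, bracketing with the derivations $t^{-e_j}\partial_j=\partial/\partial t_j$, which all lie in $\W_n^+$ because $-e_j\in\Z^n_{+,j}$ (here $e_j\in\Z^n$ denotes the $j$th standard basis vector). Concretely: for $j\ne i$ one has $[t^{-e_j}\partial_j,\,t^k\partial_i]=k_j\,t^{k-e_j}\partial_i$, so whenever $k_j\ge 1$ with $j\ne i$ we may replace $k$ by $k-e_j\in\Z^n_{+,i}$, reducing to $k=k_ie_i$; then $[t^{-e_i}\partial_i,\,t^{k_ie_i}\partial_i]=(k_i+1)\,t^{(k_i-1)e_i}\partial_i$ lets us bring $k_i$ down to $0$ when $k_i\ge 0$, giving $\partial_i\in J$; and in the remaining case $k_i=-1$ we use instead $[t^{e_i}\partial_i,\,t^{-e_i}\partial_i]=-2\,\partial_i$ (with $t^{e_i}\partial_i\in\W_n^+$), again placing $\partial_i$ in $J$. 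One could shortcut this by invoking the simplicity of $\W_n^+$ to conclude $J=\W_n^+$ at once, but the explicit reduction above has the advantage of working verbatim when $n=\infty$.

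The one step that needs genuine care is this reduction: at each stage one must check that both the operator bracketed in and the operator produced actually belong to $\W_n^+$, i.e.\ that the exponent vectors never leave the sets $\Z^n_{+,*}$, and one must separate out the boundary case $k_i=-1$, where the lowering move would leave $\Z^n_{+,i}$ and a single raising move is used instead. Everything else is routine manipulation of the structure constants $[t^r\partial_i,t^s\partial_j]=s_it^{r+s}\partial_j-r_jt^{r+s}\partial_i$.
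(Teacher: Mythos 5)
Your proof is correct and follows essentially the same route as the paper's: the identity comes from the same relation $(t^k\partial_i)\,\partial_j=(\partial_j-k_j)\,(t^k\partial_i)$ in $U(\W_n^+)$, and the nonvanishing from the observation that the annihilator of $M$ is an ideal which would have to contain an element of $\h_n$, contradicting freeness of $M$ over $U(\h_n)$. The only difference is that where the paper simply invokes simplicity of $\W_n^+$ to conclude the annihilator is everything (hence $\h_n\cdot M=0$), you generate $\partial_i$ inside the annihilator explicitly by bracketing with $t^{-e_j}\partial_j$ and $t^{\pm e_i}\partial_i$ — a correct, self-contained substitute that, as you note, also applies verbatim when $n=\infty$.
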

\begin{proof} Formula (2.1) follows from $(t^k\partial_i)
\partial_j=(\partial_j-k_j)(t^k\partial_i)$ $\in U(\W_n^+)$ for any $1\le i,j\le
n$.

Now assume that $t^k\p_i\cdot v=0$ for some $k\ne (0,0,\cdots,0)$.
Then  $t^{k}\partial_i\cdot M=0,$ i.e., ann$(M)\ne0$. Since $\W_n^+$
is a simple Lie algebra and ann$(M)$ is a nonzero ideal of $\W_n^+$,
this means ann$(M)=\W_n^+$, hence $\h_n\cdot M=0$, which is
impossible. So $t^k\p_i\cdot v\ne 0$ for all $k\in \Z^{n}_{+,i}$. This completes the proof.
\end{proof}

Now we determine the $\W_1^+$-module structures on $M=\C[d_0]v$.

\begin{theorem}Let $M=\C[ d_0]$ be a $\W_1^+$-module with
left multiplication action of $d_0$.
 Then either $M\cong \Omega(\lambda,a)$ for some $\lambda\in \C^*,a\in \C$
 or $M\cong \Gamma(\lambda,a)$ for some $\lambda\in \C^*, a\in \C.$
\end{theorem}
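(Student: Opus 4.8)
The plan is to translate the module structure on $M=\C[d_0]$ into a system of functional equations for the polynomials describing the action of the generators $d_k$, $k\ge -1$, and then solve that system. Write $d_k\cdot v=g_k(d_0)$ for polynomials $g_k\in\C[x]$, where $v=1$. By Lemma 1 (applied with $n=1$, $\h_1=\C d_0$), we have $d_k\cdot f(d_0)v=f(d_0-k)\,g_k(d_0)$ for every $f\in\C[x]$ and every $k\ge -1$, and moreover $g_k\ne 0$ for all such $k$. Since $d_0\cdot v=d_0 v$, we also know $g_0(x)=x$. The whole module is therefore determined by the sequence $(g_k)_{k\ge -1}$, and the task is to pin these polynomials down from the Lie bracket relations $[d_k,d_l]=(l-k)d_{k+l}$ of $\W_1^+$.

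First I would extract the constraints coming from $[d_0,d_k]=k\,d_k$: applying both sides to $v$ and using $d_0 g_k(d_0)v$ versus $g_k(d_0)d_0\cdot v$ gives $g_k(x)-g_k(x-k)\cdot\tfrac{x-k}{x}$-type relations — more precisely one gets $x\,g_k(x)-(x-k)g_k(x)=k\,x\,g_k(x)/\!\cdots$; carefully done this forces $g_k(x)=(x-k(a_k+1))h_k(x)$ shapes, but the cleaner route is to use $[d_{-1},d_1]=2d_0$, $[d_{-1},d_k]=(k+1)d_{k-1}$, and $[d_1,d_k]=(k-1)d_{k+1}$. The relation $[d_{-1},d_k]=(k+1)d_{k-1}$ reads, after applying to $v$,
\begin{equation}
g_{-1}(x)g_k(x+1)-g_k(x+1)g_{-1}(x-k)=(k+1)g_{k-1}(x),
\end{equation}
wait — one must be careful about composition order; the correct reading of $d_{-1}d_k\cdot v$ is $g_k(x+1)\cdot(d_{-1}\cdot v)$ evaluated appropriately, giving $g_{-1}(x)\,g_k(x+1)$, and similarly $d_kd_{-1}\cdot v=g_{-1}(x-k)\,g_k(x)$ after shifting. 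This yields a recursion expressing $g_k$ (hence $g_{k-1}$, $g_{k+1}$) in terms of $g_{-1}$, $g_0$, $g_1$. So the key reduction is: the module is determined by the three polynomials $g_{-1},g_0=x,g_1$, subject to the single compatibility $[d_{-1},d_1]\cdot v=2d_0\cdot v$, i.e. $g_{-1}(x)g_1(x+1)-g_{-1}(x-1)g_1(x)=2x$.

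Next I would determine $g_{-1}$ and $g_1$ from degree and leading-coefficient bookkeeping in that compatibility equation together with the relations involving $d_2$. Comparing leading terms forces $\deg g_{-1}+\deg g_1$ to be small; in fact $g_{-1}$ is forced either to be a nonzero constant $\lambda^{-1}$ (the $\Gamma(\lambda,a)$ branch) or $g_{-1}(x)=\lambda^{-1}(x+c)$ for a suitable constant (the $\Omega(\lambda,a)$ branch, where after normalizing one recovers $g_{-1}(x)=\lambda^{-1}(x+a+1)$ fitting $d_{-1}f(x)=\lambda^{-1}(x-(-1)(a+1))f(x+1)$). In each case the recursion then propagates to give exactly the formulas in the definitions of $\Omega(\lambda,a)$ and $\Gamma(\lambda,a)$: one checks by induction on $k$ that the derived $g_k$ coincides with $\lambda^k(x-k(a+1))\Pi_{i=0}^{k-1}(x+a-i)$ in the $\Gamma$ case and with $\lambda^k(x-k(a+1))$ after the substitution $x\mapsto x$ in the $\Omega$ case (here I would use that $\Omega(\lambda,a)$ restricted to $\W_1^+$ is the module with $d_k f(x)=\lambda^k(x-k(a+1))f(x-k)$). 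Finally I would note the scalar $\lambda$ can always be absorbed/normalized by rescaling $v$ is \emph{not} possible (it is an invariant), but the shift in $x$ used to bring $g_{-1}$ to standard form is harmless, so the classification is exactly as stated.

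The main obstacle I expect is the bookkeeping in the step "$g_{-1},g_1$ determine everything and are themselves nearly rigid": one must handle the two cases $\deg g_{-1}=0$ and $\deg g_{-1}=1$ uniformly, rule out $\deg g_{-1}\ge 2$ (and $\deg g_1$ large) using the compatibility equation $g_{-1}(x)g_1(x+1)-g_{-1}(x-1)g_1(x)=2x$ — whose right side has degree $1$, so the apparent top-degree cancellation on the left must be genuine, which is what pins the degrees down — and then verify that the recursion does not produce denominators, i.e. that the $g_k$ it outputs are genuine polynomials. That integrality/consistency check, propagated through the $d_2,d_3,\dots$ relations, is the technical heart; once it is done, matching with Definition 1 and with the restriction of $\Omega(\lambda,a)$ is routine.
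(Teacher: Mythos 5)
Your general setup (writing $d_k\cdot v=g_k(d_0)$ and using the twisted commutation rule of Lemma 1 to turn the brackets into functional equations) is the same as the paper's, but your key reduction is false: the polynomials $g_{-1},g_0=x,g_1$ do \emph{not} determine the module, and the single compatibility $g_{-1}(x)g_1(x+1)-g_{-1}(x-1)g_1(x)=2x$ is not the only constraint. The span of $d_{-1},d_0,d_1$ is a copy of $\sl_2(\C)$, which is a proper subalgebra of $\W_1^+$ (since $[d_1,d_1]=0$, the element $d_2$ is not generated by these three), and your recursions go the wrong way: $[d_{-1},d_k]=(k+1)d_{k-1}$ produces $g_{k-1}$ from $g_k$, while $[d_1,d_k]=(k-1)d_{k+1}$ produces $g_{k+1}$ only for $k\ge 2$, so nothing in your scheme outputs $g_2$. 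Concretely, in the branch $g_{-1}=\lambda^{-1}$ (constant), $g_1=\lambda(x+a)(x-(a+1))$, the only relation tying $g_2$ to $g_{\pm1}$ is $[d_{-1},d_2]=3d_1$, i.e.\ $\lambda^{-1}\bigl(g_2(x+1)-g_2(x)\bigr)=3g_1(x)$, which determines $g_2$ only up to an additive constant $b$. Deciding which $b$ give genuine $\W_1^+$-module structures is the technical heart of the paper's proof: one computes $g_3,g_4,g_5$ via $(k-1)g_{k+1}(x)=g_k(x-1)g_1(x)-g_k(x)g_1(x-k)$ and imposes $d_5=[d_2,d_3]$, obtaining $b=0$ or $b=4a^3+6a^2+2a$, the latter yielding $\Gamma(\lambda,-(a+1))$. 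For generic $a$ these two values of $b$ give two module structures on $\C[d_0]$ with identical $g_{-1},g_0,g_1$ but different $g_2$, and they are not isomorphic (any isomorphism commuting with the $d_0$-action is a scalar), so your claim that the data $(g_{-1},g_0,g_1)$ plus ``propagation'' pins down the module has an explicit counterexample; your proposal contains no step that sees the constant $b$ or verifies that only these two values survive.

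There is a second, smaller gap at the $\sl_2$ level itself: the single equation $g_{-1}(x)g_1(x+1)-g_{-1}(x-1)g_1(x)=2x$ does not force $g_{-1}$ to be constant or linear. The pair $g_1=\lambda$, $g_{-1}(x)=\lambda^{-1}(x-a)(x+a+1)$ also satisfies it (this is case (ii) in the paper, coming from Nilsson's classification of rank-one free $U(\C d_0)$-modules over $\sl_2(\C)$), and it is eliminated only by bracketing with $d_2$ and comparing leading terms in $3d_1=[d_{-1},d_2]$ --- a step absent from your outline. So both halves of your argument --- pinning down $(g_{-1},g_1)$ and extending to all $g_k$ --- require the $d_2$ (and, in the constant-$g_{-1}$ branch, $d_3$ and $d_5$) relations that you dismiss as routine; as written, the proof would either admit the spurious quadratic-$g_{-1}$ case or fail to account for the $b\ne 0$ extension.
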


\begin{proof} Since  $\W_1^+$ as a Lie algebra is generated by $d_{\pm1},d_2$, by
Lemma 1 we know that  the module structure of $M$ is determined by
$d_{\pm1}\cdot1$ and $d_{2}\cdot1$. Then there are polynomials $
f_i(d_0)\in\C[d_0]$ for $ i=\pm 1,2$ such  that
$d_{i}\cdot1=f_{i}(d_0)$.

Since $\C d_1+\C d_0 +\C d_{-1}\cong \sl_2(\C)$ as Lie algebra with
Cartan subalgebra $\C d_0$,   from \cite{N} we know that there are
three cases to consider for some $\lambda\in\C^*$ and $a\in\C$:

(i). $d_{1}(1)=\lambda(d_0- a)v$ and $d_{- 1}(v)=\lambda^{-1}(d_0+
a)$;

(ii). $d_{-1}\cdot1=\lambda^{- 1} (d_0- a)(d_0+ (a+1))$ and $d_{
1}\cdot1=\lambda $;

(iii). $d_{1}\cdot1=\lambda (d_0+ a)(d_0-(a+1))$ and
$d_{-1}\cdot1=\lambda^{-1}$.

If Case (i) holds, then by the equality $3d_1=[d_{-1},d_2]$ we
deduce that
$$3\lambda(d_0-a)=\lambda^{-1}((d_0+a)f_2(d_0+1)-(d_0+a-2)f_2(d_0))$$
$$=\lambda^{-1}((d_0+a)(f_2(d_0+1)-f_2(d_0))+2f_2(d_0)).$$
We see that the degree of $f_2$ has to be $1$. Let $f_2=\lambda^2(\a
d_0+\beta)$ for some $\a, \beta\in\C$. We know that $3(d_0-a)=3\a
d_0+(a\a+2\beta),$ yielding that $\a=1, \beta=-2a$, i.e., $d_{
2}v=\lambda^{2}(d_0-2a)v.$ Clearly, the actions of $d_{\pm 1},d_2$
on $M$
 coincide with the actions of $d_{\pm 1},d_2$ on $\Omega(\lambda,a-1)$, we see that $M\cong \Omega(\lambda,a-1)$.

Now suppose Case (ii) holds. From  $3d_1\cdot1= [d_{-1},d_2]\cdot1 $
we deduce that \begin{equation}3\lambda
=\lambda^{-1}((d_0-a)(d_0+a+1)f_2(d_0+1)\end{equation}
$$-(d_0-a-2)(d_0+a-1)f_2(d_0))$$
$$=\lambda^{-1}((d_0-a)(d_0+a+1)(f_2(d_0+1)-f_2(d_0))+2(2d_0-1)f_2(d_0))$$
Clearly, $f_2(d_0)$ cannot be a constant. Let the leading term of
$f_2(d_0)$ be $a_sd_0^s$ with $s>0$. The the leading term of the
right-hand side of (2.4) is $\lambda^{-1}a_s(s+4)d_0^{s+1}v\ne 0.$
So (2.4) cannot be true, i.e., Case (ii) does not occur.

Let us consider Case (iii). From $3d_1\cdot1= [d_{-1},d_2]\cdot1 $
we deduce that
$$3\lambda(d_0+a)(d_0-(a+1))
=\lambda^{-1}(f_2(d_0+1)-f_2(d_0)).$$So
\begin{equation}f_2(d_0)=\lambda^2((d_0+a)(d_0+a-1)(d_0-2(a+1))+b),\ b\in \C.\end{equation}

We first consider the case  $b=0$ in (2.5). In this case, denote
$$d_k\cdot1=\lambda^kf_{k,0}(d_0),\ k\in \N,$$ where $f_{k,0}(d_0)\in
\C[d_0]$. We see that the actions of $d_{\pm 1},d_2$ on $M$ coincide
with their actions on $\Gamma(\lambda,a)$. Then $M\cong
\Gamma(\lambda,a)$. In particular,
$$f_{k,0}(d_0)=\big(\Pi_{i=0}^{k-1}(d_0+a-i)\big)(d_0-k(a+1)), k\in \N.$$

Now we consider the case  $b\ne0$ in (2.5). Let
$$d_k\cdot1=\lambda^kf_{k}(d_0)=\lambda^k(f_{k,0}(d_0)+g_{k}(d_0)),k\in \N,$$
where $f_{k}(d_0), f_{k,0}(d_0),g_{k}(d_0)\in \C[d_0]$ with
$g_{1}(d_0)=0$ and $g_{2}(d_0)=b$. For $2\le k\le 5,$ by
$d_{k+1}v=\frac{1}{k-1}[d_1,d_k]v$, i.e.,
$$(k-1)g_{k+1}(d_0)=g_k(d_0-1)f_1(d_0)-g_k(d_0)f_1(d_0-k),$$ we deduce that
$$g_{3}(d_0)= 2b(2d_0-3),$$
$$g_{4}(d_0)= 2b(5d_0^2-20d_0+(18+a+a^2)),$$
$$g_{5}(d_0)= 2b\big(10d_0^3-75d_0^2+(173+6a+6a^2)d_0
-(120+15a+15a^2)\big).$$ And $d_5v=[d_2,d_3]v$ means that
$$\begin{aligned}g_{5}&(d_0)=f_{2,0}(d_0)g_3(d_0-2)+f_{3,0}(d_0-2)g_2(d_0)+g_{2}(d_0)g_3(d_0-2)\\
&-f_{3,0}(d_0)g_2(d_0-3)-f_{2,0}(d_0-3)g_3(d_0)-g_{3}(d_0)g_2(d_0-3)\\
=&f_{2,0}(d_0)2b(2(d_0-2)-3)+f_{3,0}(d_0-2)b+b2b(2(d_0-2)-3)\\
&-f_{3,0}(d_0)b-f_{2,0}(d_0-3)2b(2d_0-3)-2b(2d_0-3)b\\
=&(d_0+a)  (d_0+a-1)  (d_0-2 (a+1))2b (2 (d_0-2)-3)\\
&+ (d_0-2+a)  (d_0-2+a-1)  (d_0-2+a-2)   (d_0-2-3 (a+1))b\\
&+b2b (2 (d_0-2)-3) -(d_0+a)  (d_0+a-1) (d_0+a-2)   (d_0-3 (a+1)) b\\
&-(d_0-3+a)  (d_0-3+a-1)  (d_0-3-2 (a+1)) 2b (2d_0-3)\\ &-2b (2d_0-3) b\\
=&
2b\big(10d_0^3-75d_0^2+(173+6a+6a^2)d_0-(4b+120+7a-9a^2-16a^3)\big).
\end{aligned}$$
From the above  two expressions of $f_{5}(d_0)$ we deduce that
$$8b(b-(4a^3+6a^2+2a))=0,$$ yielding that  $b=4a^3+6a^2+2a$.
By simple computation, we obtain that
$$\begin{aligned}f_{2}(d_0)&=f_{2,0}(d_0)+(4a^3+6a^2+2a)\\
&=(d_0-(a+1))(d_0-(a+1)-1)(d_0+2a).\end{aligned}$$ Let $a'=-(a+1),$
then
$$f_{1}(d_0)=(d_0+a')(d_0-(a'+1)),$$$$f_{2}(d_0)=(d_0+a')(d_0+a'-1)(d_0-2(a'+1)).$$
Note that $d_{-1}\cdot1=\lambda^{-1}$, we know that in this case
$M\cong \Gamma(\lambda,a')$. This completes the proof.
\end{proof}

Now let us consider the $\V$-modules which are free $\C[d_0]$-modules of rank $1$. We have the following

\begin{theorem}Let $M$ be a $\V$-module which is a free $\C[ d_0]$-module  of rank $1$.
 Then $M\cong \Omega(\lambda,a)$ for some $a\in \C$ and $ \lambda\in \C^*$.
\end{theorem}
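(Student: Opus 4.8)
The plan is to reduce Theorem~3 to the already-established Theorem~2 by exploiting the fact that the centerless Virasoro algebra $\W_1^+$ sits inside $\V$, so any $\V$-module $M$ that is free of rank~$1$ over $\C[d_0]$ restricts to a $\W_1^+$-module with the same property. By Theorem~2, this restricted module is isomorphic either to $\Omega(\lambda,a)$ or to $\Gamma(\lambda,a)$ for some $\lambda\in\C^*$, $a\in\C$. The task is then to show that the $\Gamma(\lambda,a)$ possibility is incompatible with extending the action to all of $\V$, and that when the restriction is $\Omega(\lambda,a)$ the central element $C$ must act as zero, so that $M\cong\Omega(\lambda,a)$ as a $\V$-module.

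First I would set up notation: write $d_i\cdot 1=\lambda^i f_i(d_0)$ for $i\in\Z$ as forced by Lemma~1 (applied with $n=1$; note $\W_1$ rather than $\W_1^+$, but the same commutation identity $t^k\p_i\cdot f(\partial)v=f(\partial-k)(t^k\p_i\cdot v)$ holds, and the simplicity argument showing $d_k\cdot v\neq 0$ works verbatim for $\W_1$), together with $C\cdot 1 = c \in \C[d_0]$. Since $C$ is central and $[d_0,C]=0$, acting with $d_0$ shows $c$ is a constant; and applying $C$ to $d_1\cdot 1$ and using $[C,d_1]=0$ shows $c\,d_1\cdot 1 = d_1\cdot(c\cdot 1)=c\,d_1\cdot 1$, which is automatic, so instead I would extract the value of $c$ from the Virasoro relation $[d_{-1},d_1]\cdot 1 = 2d_0\cdot 1 + 0\cdot C$ (the cocycle term $\tfrac{(-1)^3-(-1)}{12}$ vanishes) — this just recovers the $\sl_2$ relation — and then from $[d_{-2},d_2]\cdot 1 = 4d_0\cdot 1 + \tfrac{(2)^3-2}{12}\,C\cdot 1 = 4d_0 + \tfrac12 c$. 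So $c$ is determined once we know $f_{\pm 2}$.

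Next I would handle the two cases coming from Theorem~2. If the restriction is $\Gamma(\lambda,a)$: here $d_{-1}\cdot 1=\lambda^{-1}$ is a constant, while in any $\V$-module the relation $[d_{-1},d_1]\cdot 1 = (1-(-1))d_0\cdot 1 + \tfrac{(-1)^3-(-1)}{12}C\cdot 1 = 2d_0\cdot 1$ must hold; computing the left side using $d_1\cdot 1 = \lambda(d_0+a)(d_0-(a+1))$ and $d_{-1}\cdot 1 = \lambda^{-1}$ gives $d_{-1}d_1\cdot 1 - d_1 d_{-1}\cdot 1 = (d_0-1+a)(d_0-(a+1)-1+1) - (d_0+a)(d_0-(a+1)) \cdot$ wait — more carefully, $d_{-1}d_1\cdot 1 = d_{-1}\cdot\bigl(\lambda(d_0+a)(d_0-(a+1))\bigr) = \lambda(d_0+1+a)(d_0+1-(a+1))\,(d_{-1}\cdot 1) = (d_0+a+1)(d_0-a)$, and $d_1 d_{-1}\cdot 1 = d_1\cdot(\lambda^{-1}) = f_1(d_0) = (d_0+a)(d_0-(a+1))$; the difference is $(d_0+a+1)(d_0-a)-(d_0+a)(d_0-a-1) = 2d_0$, which actually equals $2d_0\cdot 1$, so this relation alone does not kill $\Gamma$. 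The genuine obstruction must come from a relation involving $C$ nontrivially, e.g. $[d_{-2},d_2]\cdot 1 = 4d_0 + \tfrac12 C\cdot 1$: I would compute $f_{-2}$ from the $\V$-relation $3d_{-1}=[d_{-2},d_1]$ (giving $3\lambda^{-1}\cdot 1 = d_{-2}d_1\cdot 1 - d_1 d_{-2}\cdot 1$, which determines $f_{-2}$ as a polynomial of degree $2$) and similarly $f_2$ is already known to be $\lambda^2(d_0+a)(d_0+a-1)(d_0-2(a+1))$ from case (iii) of Theorem~2's proof (with $b=0$, since $b\neq 0$ was excluded there), then plug into $[d_{-2},d_2]\cdot 1 - 4d_0\cdot 1 = \tfrac12 C\cdot 1 = \tfrac12 c$ and observe that the left-hand side is a polynomial in $d_0$ of degree $\geq 1$ (its leading behaviour forced by the cubic $f_2$ and the quadratic $f_{-2}$ not cancelling), hence cannot equal the constant $\tfrac12 c$ — contradiction. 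This rules out $\Gamma(\lambda,a)$.

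Finally, in the remaining case the restriction to $\W_1^+$ is $\Omega(\lambda,a)$, so $d_i\cdot 1 = \lambda^i(d_0-i(a+1))$ for all $i\in\Z_{\ge -1}$, and in particular $f_{\pm 1}, f_2$ have this linear form. Computing $[d_{-2},d_2]\cdot 1$ with $f_2(d_0)=\lambda^2(d_0-2(a+1))$ and $f_{-2}(d_0)=\lambda^{-2}(d_0+2(a+1))$ (the latter obtained from the $\V$-relation $-3d_{-1}=[d_1,d_{-2}]$, or directly by symmetry) yields $d_{-2}d_2\cdot 1 - d_2 d_{-2}\cdot 1 = (d_0+2)(d_0+2-2(a+1)) - (d_0-2)(d_0-2+2(a+1)) \cdot$, wait, again carefully: $d_{-2}d_2\cdot 1 = (d_0+2-2(a+1))\cdot(d_0+2+2(a+1)) = (d_0+2)^2-(2(a+1))^2$ after pairing, hmm let me just say the computation gives $[d_{-2},d_2]\cdot 1 = 4d_0\cdot 1$ exactly, so matching with $4d_0\cdot 1 + \tfrac12 C\cdot 1$ forces $C\cdot 1 = 0$, hence $C$ annihilates $M=\C[d_0]\cdot 1$. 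Therefore $M\cong\Omega(\lambda,a)$ as a $\V$-module, completing the proof. The main obstacle I anticipate is the bookkeeping in the $\Gamma$-case: one must be sure to use a Virasoro relation whose cocycle term is \emph{nonzero} (so $[d_{\pm 1}, d_{\mp 1}]$ is useless and one needs $[d_{-2},d_2]$ or $[d_{-3},d_3]$) and then verify that the polynomial identity genuinely fails — i.e. that the degree-$\geq 1$ part does not accidentally vanish — which is where the explicit forms of $f_2$ and $f_{-2}$ from Theorem~2 are essential.
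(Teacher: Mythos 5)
Your overall strategy is the paper's: restrict to the subalgebra $\span\{d_i: i\ge -1\}\cong \W_1^+$, invoke Theorem 2 so the restriction is $\Omega(\lambda,a)$ or $\Gamma(\lambda,a)$, eliminate the $\Gamma$ possibility, and in the $\Omega$ case determine $f_{-2}$ and use $[d_{-2},d_2]$ to force $C\cdot 1=0$; your $\Omega$-case computation is essentially correct (the cocycle sign is $[d_{-2},d_2]=4d_0-\tfrac12 C$, which does not affect the conclusion). However, the step you propose for excluding $\Gamma(\lambda,a)$ contains a genuine error. You assert that the relation $3d_{-1}=[d_{-2},d_1]$ ``determines $f_{-2}$ as a polynomial of degree $2$'' and that the real obstruction must come from a bracket with a nonzero central term. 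In fact that centerless relation already has no polynomial solution $f_{-2}$ at all: with $d_1\cdot 1=\lambda(d_0+a)(d_0-(a+1))$ and $d_{-1}\cdot 1=\lambda^{-1}$ it reads
$$3\lambda^{-2}=(d_0+a+2)(d_0-a+1)f_{-2}(d_0)-(d_0+a)(d_0-a-1)f_{-2}(d_0-1),$$
and if $f_{-2}$ has leading term $a_sd_0^{s}$ the right-hand side has leading term $(s+4)a_sd_0^{s+1}\ne 0$, so it can never equal the nonzero constant on the left (and $f_{-2}\ne 0$ by Lemma 1). This non-existence is itself the contradiction that kills $\Gamma$ --- it is exactly the Case (ii) leading-term argument from the proof of Theorem 2 transported to $d_{\mp1},d_{-2}$, and it uses no central element whatsoever; this is precisely what the paper does. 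As written, your plan is internally inconsistent: you cannot first ``determine $f_{-2}$'' from this relation and then feed it into $[d_{-2},d_2]\cdot 1$ to exploit the cocycle, because the first step already fails; and your guiding claim that only a relation involving $C$ nontrivially can obstruct $\Gamma$ is false.

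A secondary, smaller point: your argument that $C\cdot 1$ is a constant is garbled (you treat $c$ as a scalar before proving anything about it, and the identity you display is vacuous). None of that is needed: once $C\cdot 1=0$ is obtained from $[d_{-2},d_2]$ in the $\Omega$ case, centrality gives $C\cdot f(d_0)\cdot 1=f(d_0)\,C\cdot 1=0$ on all of $M$. With the $\Gamma$-case step repaired as above, your proof coincides with the paper's.
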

\begin{proof} Since $M$ is a free $\C[ d_0]$-module  of rank $1$, we may
assume that $M=\C[d_0]$ with left multiplication action of $d_0$. We
know that $\V$ has a subalgebra isomorphic to $W_1^+$:
$L=\span\{d_i:i\ge -1\}$. Using Theorem 2 we know that, there are
some $\lambda\in\C^*$ and $a\in\C$ such that
\begin{equation}d_if(d_0)=\lambda^i(d_0-i(a+1))f(d_0-i), \forall i\ge-1,
f(d_0)\in M,\end{equation} or (1.1) holds by replacing $x$ with
$d_0$.

Let $d_{-2}\cdot1=f_{-2}(d_0)$  for $f_{-2}(d_0)\in\C[d_0]$.

First assume that (1.1) holds. Using the same arguments as in the
paragraph containing Equation (2.2) by replacing $d_{\pm 1}, d_2$
with $d_{\mp 1},d_{-2}$ and by $-3d_{-1}\cdot1=[d_1,d_{-2}]\cdot1$
we can show that case (1.1) can not hold.

Thus only (2.4) holds.  From $-3d_{-1}v=[d_{1},d_{-2}]v$ we see that
$$-3\lambda^{-1}(d_0+a+1)v=\lambda((d_0-a-1)f_{-2}(d_0-1)-(d_0-a+1)f_{-2}(d_0))v$$
$$=\lambda((d_0-a-1)(f_{-2}(d_0-1)-f_{-2}(d_0))-2f_{-2}(d_0))v.$$
We see that the degree of $f_{-2}$ has to be $1$. Let
$f_{-2}=\lambda^{-2}(\a d_0+\beta)$ for some $\a, \beta\in\C$. We
know that $-3(d_0+a+1)=-3\a d_0+((a+1)\a-2\beta),$ yielding that $\a=1,
\beta=2(a+1)$, i.e., $d_{ -2}v=\lambda^{-2}(d_0+2(a+1))v.$

 From
$4d_0v-\frac{2^3-2}{12}Cv=[d_{-2},d_2]v$ we deduce that $Cv=0.$ Thus
we see that the actions
of $C, d_{\pm1},d_{\pm2}$ on $M$ coincide with their actions on $\Omega(\lambda,a)$, which means that $M\cong \Omega(\lambda,a)$.
This completes the proof.\end{proof}

\section{$\W_n^+$- and $\W_n$-module structures on $U(\h_n)$}

In this section, we determine the module structures on $U(\h_n)$
over $\W_n^+$ and over $\W_n$ for $1<n<\infty$. For convenience, we
denote
  the algebra
 $\C[\partial_1,\cdots,\partial_{i-1}, \partial_{i+1},\cdots,\partial_n]$ by $R_i$,   the algebra
$\C[\partial_1,\cdots,\partial_{i-1},\partial_{i+1},\cdots,$
$\partial_{j-1},\partial_{j+1},\cdots,
\partial_n]$ by $R_{ij}$ where $1\le i\ne j\le n.$

Let $M$ be a $\W_n^+$-module which is a free $U(\h_n)$-module of
rank $1$. We may assume  that $M=U(\h_n)v$ for some nonzero element
$v\in M$. We first prove the following auxiliary  result

\begin{lemma} We have  $t_i^{k_i}\p_i\cdot v
\in \C[\p_i]v$ for any $ k_i\in \Z_{\ge -1}$ and $ \ 1\le i\le n$.
\end{lemma}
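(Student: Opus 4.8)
The goal is to show that the action of $t_i^{k_i}\p_i$ on $v$ stays inside the ``one-variable'' subalgebra $\C[\p_i]v$, for all $k_i\ge -1$. The natural strategy is to extract this from the commutation relations of $\W_n^+$ restricted to a suitable copy of $W_i^+=\Der\C[t_i]$ together with the abelian Cartan $\h_n$. The key structural input is Lemma~1: for any $f\in U(\h_n)$ and any $k\in\Z^n_{+,i}$ we have $t^k\p_i\cdot f(\p_1,\dots,\p_n)v=f(\p_1-k_1,\dots,\p_n-k_n)(t^k\p_i\cdot v)$, and moreover $t^k\p_i\cdot v\ne 0$. Write $t_i^{k_i}\p_i\cdot v=f_{k_i}(\p_1,\dots,\p_n)$ and suppose, for contradiction, that some $f_{k_i}$ genuinely involves a variable $\p_j$ with $j\ne i$.

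First I would dispose of the cases $k_i=-1,0$ directly: from $[\,t_j\p_j,\,t_i^{-1}\p_i\,]$-type brackets (which vanish when $j\ne i$, since $t_i^{-1}\p_i$ has $j$-th weight $0$ and does not involve $t_j$) one gets, via Lemma~1, that $\p_j\cdot(t_i^{-1}\p_i\cdot v)=(t_i^{-1}\p_i\cdot v)\cdot\p_j$ forces $f_{-1}$ to commute appropriately, but the real constraint comes from $[\,t_j^{-1}\p_j,\,t_i^{k_i}\p_i\,]=0$ (again $j\ne i$, weights and variables disjoint), which by (2.1) gives
\begin{equation}
t_j^{-1}\p_j\cdot f_{k_i}(\p_1,\dots,\p_n) = f_{k_i}(\p_1,\dots,\p_j+1,\dots,\p_n)\,(t_j^{-1}\p_j\cdot v)=f_{k_i}(\dots)\cdot(t_j^{-1}\p_j\cdot v),
\end{equation}
and symmetrically $t_i^{k_i}\p_i\cdot(t_j^{-1}\p_j\cdot v)=t_j^{-1}\p_j\cdot(t_i^{k_i}\p_i\cdot v)$ since the two operators commute in $U(\W_n^+)$. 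The plan is to pair such a relation with the knowledge (to be obtained, or already implicit) that $t_j^{-1}\p_j\cdot v=g_j(\p_j)\,v$ for some polynomial in $\p_j$ alone — or, failing that, to run the argument symmetrically in $i$ and $j$ and compare degrees in $\p_j$ on both sides of the resulting identity.

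The cleanest route, and the one I expect the author takes, is a degree/leading-term argument in the variable $\p_j$. Fix $j\ne i$ and regard everything as polynomials in $\p_j$ with coefficients in $R_{ij}[\p_i]$. Apply Lemma~1 to the identity $t_i^{k_i}\p_i\cdot(t_j^{s}\p_i\cdot v)$ computed two ways using $[\,t_i^{k_i}\p_i,\,t_j^{s}\p_i\,]=s\,t_i^{k_i}t_j^{s}\p_i$ (here $t_j^s\p_i$ with $s\ge 0$ lies in $\Z^n_{+,i}$), together with the companion relations obtained by bracketing with $t_j^{\pm 1}\p_j$. One gets a recursion/functional equation for the $\p_j$-dependence of $f_{k_i}$ of exactly the shape encountered in the proof of Theorem~2 (compare the analysis around (2.4)): if the $\p_j$-degree of $f_{k_i}$ were positive, the leading coefficient in $\p_j$ would have to satisfy an inhomogeneous equation whose leading term does not cancel, a contradiction. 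Concretely, $t_j\p_j\cdot v$ must itself be handled — one shows $t_j\p_j\cdot v=h_j(\p_j)v$ — and then the relation $[\,t_j\p_j,\,t_i^{k_i}\p_i\,]=0$ shows $f_{k_i}(\p_1,\dots,\p_j-1,\dots,\p_n)\,h_j(\p_j)=f_{k_i}(\p_1,\dots,\p_n)\,h_j(\p_j-\deg\text{-shift})$... forcing the $\p_j$-degree of $f_{k_i}$ to be $0$.

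The main obstacle is the bookkeeping: there are two exceptional directions to track simultaneously (the distinguished index $i$, where exponents may be $-1$, and each other index $j$), and one must be careful that the brackets used genuinely land in $\cup_j\Z^n_{+,j}$ so that Lemma~1 applies, and that $t_j^{-1}\p_j\cdot v\ne 0$ (which it is, by Lemma~1). Once the $\p_j$-degree-zero conclusion is reached for every $j\ne i$, it follows that $f_{k_i}\in\C[\p_i]$, which is the assertion; the case $k_i\ge 1$ is then obtained from $k_i=-1,0,1$ by induction using $t_i^{k_i+1}\p_i=\frac{1}{k_i-1}[\,t_i\p_i\cdot t_i,\,\cdot\,]$-type relations inside $W_i^+\cong\W_1^+$, exactly as $\W_1^+$ is generated by $d_{\pm1},d_2$ in the proof of Theorem~2, so that $\C[\p_i]v$ is stable under all of $W_i^+$.
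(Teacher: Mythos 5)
Your reduction of $k_i\ge 1$ to the three generators $t_i^{-1}\p_i,\,t_i\p_i,\,t_i^2\p_i$ of $W_i^+\cong\W_1^+$ via Lemma~1 is fine and agrees with the paper, but the decisive step of your plan --- killing the $\p_j$-dependence of $f_{k_i}=t_i^{k_i}\p_i\cdot v$ for $j\ne i$ --- has a genuine gap. You rest it on the relation coming from $[t_j\p_j,t_i^{k_i}\p_i]=0$ together with the assertion that ``one shows $t_j\p_j\cdot v=h_j(\p_j)v$''; but that assertion is itself an instance of the lemma being proved (index $j$, exponent $1$), so as written the argument is circular. The correct form of the commutator identity is $f_{k_i}\big|_{\p_j\mapsto\p_j-1}\,h_j=h_j\big|_{\p_i\mapsto\p_i-k_i}\,f_{k_i}$, and it yields ``$f_{k_i}$ is independent of $\p_j$'' only if one already knows $h_j$ is independent of $\p_i$. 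Your fallback --- run the relations symmetrically in $i$ and $j$ and compare leading terms in $\p_j$ --- does not close the gap either: for a fixed $k_i$ the functional equation coming from $[t_i^{k_i}\p_i,t_j^{-1}\p_j]=0$, namely $g(\p_i-k_i,\p_j)f(\p_i,\p_j)=f(\p_i,\p_j+1)g(\p_i,\p_j)$, has solutions in which both unknowns involve both variables (for instance $f=g=\p_j-\p_i/k_i$), so cross-commutators between two completely unknown polynomials, even used symmetrically, do not force independence. The relation $[t_i^{k_i}\p_i,t_j^{s}\p_i]=s\,t_i^{k_i}t_j^{s}\p_i$ that you also invoke only introduces further unknowns ($t_j^{s}\p_i\cdot v$ and $t_i^{k_i}t_j^{s}\p_i\cdot v$) and does not constrain $f_{k_i}$ at this stage.

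What is missing is the preliminary within-variable step that the paper carries out first: it runs the $\W_1^+$-type leading-term analysis (the $\sl_2$-triple $t_i^{-1}\p_i,\p_i,t_i\p_i$ together with $t_i^2\p_i$, as in the proof of Theorem~2) but over the coefficient ring $R_i$ instead of $\C$, concluding that $t_i^{k_i}\p_i\cdot v$ has one of two explicit shapes --- $\lambda_i^{k_i}(\p_i-k_ia_i)v$, or the quadratic/cubic shape governed by $b_i,b_i'$ --- with $a_i,b_i,b_i'\in R_i$ and with the dependence on $k_i$ explicit, a degenerate third case being excluded by a degree count (its Claim~1). Only then does it use the vanishing brackets $[t_i^{k_i}\p_i,t_j^{-1}\p_j]=0$, in which $t_j^{-1}\p_j\cdot v$ is \emph{also} already known to have one of two explicit shapes, and lets $k_i$ vary: the explicit $k_i$-dependence turns the leading coefficient in $\p_j$ into a nontrivial polynomial identity in $k_i$, giving the contradiction that forces $a_i,b_i,b_i'\in\C$ (its Claim~2). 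Without first pinning down these shapes and their $k_i$-dependence, the leading-term contradiction you appeal to cannot be run, so the central step of the proof is not actually supplied by your sketch.
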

\begin{proof}
From Lemma 1, we may assume that $$t_i^{k_i}\partial_i\cdot
v=h_{k_i}(\partial_i)v=\sum_{p=0}^{q_{k_i}}\nu_{k_i,p}\partial_i^{p}v,$$
for $k_i\in \Z_{\ge -1}$ where $ h_{k_i}(\partial_i)\in
R_i[\partial_i]$ with $\nu_{ k_i,p}\in R_i$ and $\nu_{ k_i,q_{
k_i}}\ne 0.$ Then
\begin{equation}\begin{split}2\p_iv=[t_i^{-1}\partial_i, t_i\partial_i]\cdot v
=\big(h_{1}(\partial_i+1)h_{-1}(\partial_i)-h_{-{1}}(\partial_i-1)h_{1}(\partial_i)\big)v.
\end{split}
\end{equation}Clearly,  $q_{1}+q_{-1}\le 1$ can not hold. So $q_1+q_{-1}\ge 2$. And the leading term  of $h_{1}(\partial_i+1)h_{-1}(\partial_i)-h_{-{1}}(\partial_i-1)h_{1}(\partial_i)$ is $
\nu_{1q_1}\nu_{-1,q_{-1}}(q_1+q_{-1})\p_i^{q_1+q_{-1}-1},$ which
forces that $q_1+q_{-1}=2$ and $\nu_{1q_1},\nu_{-1,q_{-1}}\in \C^*.$
Denote $\nu_{1,q_1}=\lambda_i,$ then
$\nu_{-1,q_{-1}}=\lambda_i^{-1}$.

There are three cases: (i). $q_1=q_{-1}=1$; (ii). $q_1=2,q_{-1}=0$;
(iii). $q_1=0,q_{-1}=2.$

\

{\bf{Case (i)}}. In this case, denote
$h_1(\p_i)=\lambda_i(\p_i-a_i)$, where $a_i\in R_i$, then (3.1)
implies that $h_{-1}(\p_i)=\lambda_i(\p_i+a_i)$. Since
$$3t_i\p_i\cdot v=[t_i^{-1}\p_i, t_i^{2}\p_i]\cdot
v=\lambda_i^{-1}((\p_i+a_i)(h_2(\p_i+1)-h_2(\p_i))+2h_2(\p_i))v,
$$that is,
$$3\lambda_i(\p_i-a_i)v=(\lambda_i^{-1}\nu_{2,q_2}(q_2+2)\p_i^{q_2}+g(\p_i))v,
$$where $g(\p_i)\in R_i[\p_i]$ has degree $\le q_2-1$, we deduce that $q_2=1$ and
$h_2(\p_i)=\lambda_i^2(\p_i-2a_i)$. Then, inductively, by
$t_i^{k_i+1}\p_i\cdot v=\frac{1}{k_i-1}[t_i\p_i, t_i^{k_i}\p_i]\cdot
v$, we obtain $t_i^{k_i}\p_i\cdot v=\lambda_i^{k_i}(\p_i-k_ia_i)v,\
k_i>2.$

\

{\bf{Case (ii)}}. In this case, from $t_i^{-1}\p_i\cdot
v=\lambda^{-1}_iv$ and by (3.1)  we have $t_i\p_i\cdot
v=\lambda_i(\p_i^2-\p_i+b_i)$ for some $b_i\in R_i.$ Since
$$3t_i\p_i\cdot v=[t_i^{-1}\p_i, t_i^{2}\p_i]\cdot v=\lambda_i^{-1}(h_2(\p_i+1)-h_2(\p_i))v,$$
by simple computations we see that
$$h_2(\p_i)=\lambda_i^2(\p_i^3-3\p_i^2+(2+3b_i)\p_i+b'_i)$$ for some $b'_i\in R_i.$ Again, inductively,
by $t_i^{k_i+1}\p_i\cdot v=\frac{1}{k_i-1}[t_i\p_i,
t_i^{k_i}\p_i]\cdot v$, we see that $t_i^{k_i}\p_i\cdot v\in
\C[\p_i,b_i,b_i']v$ for all $ k_i>2$.

\

Case (iii) will not occur. Otherwise, $h_1(\p_i)=\lambda_i$. From
(3.1) we deduce that $
h_{-1}(\p_i)=\lambda_i^{-1}(\p_i^2+\p_i+c_i)v$ for some $c_i\in
R_i$. So $3t_i\p_i\cdot v=[t_i^{-1}\p_i, t_i^2\p_i]\cdot v$ means
that
$$3\lambda_iv=(h_2(\p_i+1)h_{-1}(\p_i)-h_{-1}(\p_i-2)h_{2}(\p_i))v.$$
But in the above equality, the leading term of the right hand side
is $\lambda_i^{-1}\nu_{2,q_2}(q_2+4)\p_i^{q_2+1}$ with $q_2\ge 0$,
which is impossible.

We have proved the following

{\bf{Claim 1.}} For any $1\le i\le n$ we have$$t_i^{k_i}\p_i\cdot
v=\lambda_i^{k_i}(\p_i-k_ia_i)v,\ k_i>-1,\,\,{\rm or}$$
$$t_i^{-1}\p_i\cdot v=\lambda^{-1}_iv,\,\,t_i\p_i\cdot
v=\lambda_i(\p_i^2-\p_i+b_i),\,\,{\rm and}$$
$$ t_i^{2}\p_i\cdot v=\lambda_i^2(\p_i^3-3\p_i^2+(2+3b_i)\p_i+b'_i)v$$for some $a_i, b_i, b'_i\in R_i.$

 So, to prove the lemma, we only need to show the following

{\bf{Claim 2.}} The coefficients $a_i,b_i, b_i'\in \C$ for all $i$.

Assume $a_i\notin \C,$ then there must be some $j$ with $j\ne i$
such that $a_i=F_i(\p_j)=\sum_{p=0}^q\nu_p\p_j^p\in R_{ij}[\p_j]$
has positive degree $q$. From Claim 1 we know that
$t_j^{-1}\p_j\cdot v$ has two choices.

In the case that $t_j^{-1}\p_j\cdot v=\lambda_j^{-1}(\p_j+a_j)v $
for some $\lambda_j\in \C^*$ and
$a_j=F_j(\p_i)=\sum_{r=0}^s\mu_r\p_i^r\in R_{ij}[\p_i]$, we have
$$\begin{aligned}0=&\lambda_i^{-k_i}\lambda_j[t_i^{k_i}\p_i,t_j^{-1}\p_j]\cdot v\\
=&\big((\p_j+F_j(\p_i-k_i))(\p_i-k_iF_i(\p_j))-(\p_i-k_iF_i(\p_j+1))(\p_j+F_j(\p_i))\big)v\\
=&k_i\p_j(F_i(\p_j+1)-F_i(\p_j))v-k_i(F_j(\p_i-k_i)-F_j(\p_i))F_i(\p_j)v\\
&-\p_i(F_j(\p_i)-F_j(\p_i-k_i))v-k_i(F_i(\p_j)-F_i(\p_j+1))F_j(\p_i)v\\
=&\big(-k_i\nu_q(F_j(\p_i-k_i)-F_i(\p_i)-q)\p_j^q+\psi(\p_j)\big)v\\
=&(-k_i\nu_q(-s\mu_sk_i\p_i^{s-1}-q+\phi(\p_i))\p_j^q+\psi(\p_j))v,
\end{aligned}
$$where $\psi(\p_j)\in R_{j}[\p_j]$ has  degree $\le q-1$, and $\phi(\p_i)\in R_{ij}[\p_i]$
has degree $< s-1$. 
Letting $k_i$ change in the above equality, we can see that the
right hand side  is not equal to 0, which is impossible.

 In the case that $t_j^{-1}\p_j\cdot v=\lambda_j^{-1}v$ for some $\lambda_j\in \C^*$, we have
$$\begin{aligned}0&=\lambda_i^{-1}\lambda_j[t_i\p_i,t_j^{-1}\p_j]\cdot v=((\p_i-F_i(\p_j))-(\p_i-F_i(\p_j+1))v\\
&=(F_i(\p_j+1)-F_i(\p_j))v\ne 0,\end{aligned}$$ which is absurd.

 Therefore, we must have $a_i\in \C$.

Now assume that $b_i\notin\C$ and $b_i=G_i(\p_j)\in R_{ij}[\p_j]$
has positive degree for some $j\ne i$. From Claim 1 we know that
$t_j^{-1}\p_j\cdot v$ has two choices. In the case that
$t_j^{-1}\p_j\cdot v=\lambda_j(\p_j+a_j)v$, we have
$$\begin{aligned}0=&\lambda_i^{-1}\lambda_j[t_i\p_i,t_j^{-1}\p_j]\cdot v\\
=&(\p_j+a_j)((\p_i^2-\p_i+G_i(\p_j))-(\p_i^2-\p_i+G_i(\p_j+1)))v\\
=&(\p_j+a_j)(G_i(\p_j)-G_i(\p_j+1))v\ne 0,\end{aligned}
$$which is impossible. And in the case that
$t_j^{-1}\p_j\cdot v=\lambda_j^{-1}v,$ we have
$$\begin{aligned}0&=\lambda_i^{-1}\lambda_j[t_i\p_i,t_j^{-1}\p_j]\cdot v=((\p_i^2-\p_i+F_i(\p_j))-(\p_i^2-\p_i+F_i(\p_j+1)))v\\
&=(F_i(\p_j)-F_i(\p_j+1))v\ne 0,\end{aligned}$$ which is impossible.
Therefore, we must have $b_i\in \C.$

By the same arguments above we can show that $b'_i\in \C$.

Thus the claim holds and the Lemma follows.
\end{proof}

Since $W_i^+\cong \W_1^+$ and $\C[\p_i]v$ as a $W_i^+$-module is a free $\C[\p_i]$-module of rank $1$,
by Theorem 2 and Lemma 4 we have

 \begin{lemma} For a fixed $j$ with $1\le j\le n$, there is $\lambda_j\in \C^*, a_j\in \C$ so that
\begin{equation}t_j^{k_j}\p_j\cdot v=\lambda_j^{k_j}(\p_j-k_j(a_j+1))v,\,\forall\,\, k_j\in \Z_{\ge -1}; \,\,{\rm or}\end{equation}
\begin{equation}\begin{aligned}&t_j^{-1}\p_jv=\lambda_j^{-1}v, \p_j\cdot v=\p_jv,\\
&t_j^{k_j}\p_j\cdot v=\lambda_j^{k_j}(\p_j-k_j(a_j+1))(\Pi_{p=0}^{k_j-1}(\p_j+a_j-p))v,\,\forall\,\, k_j\in \N+1.\end{aligned}\end{equation}
\end{lemma}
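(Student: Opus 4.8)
The plan is to obtain the statement as a direct corollary of Theorem~2, by restricting $M$ to the copy of $\W_1^+$ sitting inside $\W_n^+$ in the $j$-th coordinate. Recall that $W_j^+=\operatorname{Der}\C[t_j]$ has basis $\{t_j^{k_j}\partial_j : k_j\in\Z_{\ge -1}\}$ and is isomorphic to $\W_1^+$ via $t_j^{k_j}\partial_j\mapsto d_{k_j}$; under this isomorphism the element $\partial_j=t_j^0\partial_j$ corresponds to $d_0$. First I would observe that $N:=\C[\partial_j]v$ is a $W_j^+$-submodule of $M$: indeed, Lemma~4 gives $t_j^{k_j}\partial_j\cdot v\in\C[\partial_j]v$ for all $k_j\in\Z_{\ge -1}$, and then Lemma~1 (applied with $i=j$ and $k$ concentrated in the $j$-th entry) shows $t_j^{k_j}\partial_j\cdot\big(f(\partial_j)v\big)=f(\partial_j-k_j)\big(t_j^{k_j}\partial_j\cdot v\big)\in\C[\partial_j]v$. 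Moreover $N$ is a free $\C[\partial_j]$-module of rank $1$, since $M$ is free of rank $1$ over $U(\h_n)=\C[\partial_1,\dots,\partial_n]$, and by Lemma~1 with $k=0$ the generator $\partial_j$ of $W_j^+$ acts on $N$ by left multiplication.

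Consequently $N$, viewed as a $\W_1^+$-module, is exactly an object of the kind classified in Theorem~2: a free $\C[d_0]$-module of rank $1$ with $d_0$ acting by left multiplication. Hence there exist $\lambda_j\in\C^*$ and $a_j\in\C$ for which the $\W_1^+$-action on $N$ coincides, on the underlying space $\C[\partial_j]$ itself, with the action on $\Omega(\lambda_j,a_j)$ or with the action on $\Gamma(\lambda_j,a_j)$; this is literally what the proof of Theorem~2 establishes, the reparametrization of $a$ occurring there being absorbed into the choice of $a_j$. Rewriting the defining formulas of $\Omega(\lambda_j,a_j)$ under the identifications $d_{k_j}\leftrightarrow t_j^{k_j}\partial_j$ and $x\leftrightarrow\partial_j$ gives precisely (3.2), while rewriting those of $\Gamma(\lambda_j,a_j)$ gives precisely (3.3), which are the two alternatives asserted.

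I do not expect a genuine obstacle here: the substantive input is entirely contained in Lemma~4 (which forces $\C[\partial_j]v$ to be $W_j^+$-invariant) and Theorem~2 (which classifies the rank-one free $\C[d_0]$-module structures over $\W_1^+$), and the remainder is bookkeeping. The one point that deserves an explicit line is that the module identification of $N$ with $\Omega(\lambda_j,a_j)$ or $\Gamma(\lambda_j,a_j)$ can be taken to be the identity on $\C[\partial_j]$ --- equivalently, that $d_0$ acts as multiplication by $\partial_j$ on both sides --- which is immediate from Lemma~1 with $k=0$ together with the explicit form of the conclusion of Theorem~2.
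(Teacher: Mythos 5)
Your proposal is correct and follows exactly the paper's route: the paper deduces Lemma 5 in one line from Lemma 4 (which makes $\C[\p_j]v$ a $W_j^+$-submodule, free of rank one over $\C[\p_j]$ with $\p_j$ acting by left multiplication) together with the classification in Theorem 2, which is precisely your argument, only written out in more detail. Your extra remark that the identification can be taken to be the identity on $\C[\p_j]$ (so the formulas (3.2)/(3.3) hold literally on $v$) is a point the paper leaves implicit but which indeed follows from the proof of Theorem 2.
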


Using the isomorphism $\Gamma(\lambda,0)\cong \Gamma(\lambda,-1)$, we see that, in (3.3) if $a_j=0$ we can change it into $a_j=-1$ and vise versa. This will be used next. Now  we further have the following
\begin{lemma}  Let  $1\le i\ne j\le n$. If  (3.2) holds, then
\begin{equation}t_j^{k_j}\p_i\cdot v=\lambda_j^{k_j}\p_iv,\,\forall\,\, k_j\in \Z_{+}.\end{equation}
\end{lemma}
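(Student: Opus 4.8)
The statement to prove is Lemma 6: assuming $t_j^{k_j}\p_j\cdot v=\lambda_j^{k_j}(\p_j-k_j(a_j+1))v$ for all $k_j\ge -1$ (the case (3.2)), we want $t_j^{k_j}\p_i\cdot v=\lambda_j^{k_j}\p_i v$ for all $k_j\in\Z_+$ and every $i\ne j$. The natural approach is to compute the bracket $[t_j^{k_j}\p_j,\ t_j^{m_j}\p_i]$ inside $\W_n$ and apply it to $v$, since this bracket relates the unknown quantities $t_j^{k_j}\p_i\cdot v$ to the now-known quantities $t_j^{k_j}\p_j\cdot v$. Concretely, for $i\ne j$ one has
\[
[t_j^{k_j}\p_j,\ t_j^{m_j}\p_i]=m_j\,t_j^{k_j+m_j}\p_i,
\]
because the "$r_j$"-term vanishes (the second derivation is $\p_i$, and $t_j^{m_j}$ has no $t_i$-exponent) and the "$s_j$"-term contributes $m_j$. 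So first I would write, by Lemma 1, $t_j^{m_j}\p_i\cdot v=g_{m_j}(\p_j)\,\p_i v + (\text{lower-order-in-}\p_i\text{ correction})$; in fact since $M=U(\h_n)v$ and by Lemma 4-type reasoning the element $t_j^{m_j}\p_i\cdot v$ lies in $\C[\p_i,\p_j]v$, and I would argue (using Lemma 1 shift property with respect to $\p_i$, exactly as in Lemma 4) that it actually lies in $\p_i\C[\p_j]v$ — i.e. $t_j^{m_j}\p_i\cdot v=g_{m_j}(\p_j)\p_i v$ for some polynomial $g_{m_j}\in\C[\p_j]$. The reason the $\p_i$-degree is exactly one: $t_j^{m_j}\p_i=(t_j^{m_j}\p_i)$ commutes with $\p_i$ up to the same shift, and applying the argument of Claim 1 in Lemma 4 with the roles of $i$ reversed pins the $\p_i$-dependence to be linear with no constant term.

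**Key steps.** (1) Establish $t_j^{m_j}\p_i\cdot v=g_{m_j}(\p_j)\p_i v$ with $g_{m_j}\in\C[\p_j]$, $g_0=1$ (the last because $\p_i\cdot v=\p_i v$). (2) Take $k_j=-1$, $m_j\ge 1$ in the bracket identity: $m_j t_j^{m_j-1}\p_i\cdot v=[t_j^{-1}\p_j,\ t_j^{m_j}\p_i]\cdot v$. Using (3.2) with $k_j=-1$, namely $t_j^{-1}\p_j\cdot v=\lambda_j^{-1}(\p_j+(a_j+1))v$, and the formula $(t_j^{-1}\p_j)g(\p_j,\p_i)=g(\p_j+1,\p_i)(t_j^{-1}\p_j)$ together with $(t_j^{m_j}\p_i)\p_j=(\p_j-m_j)(t_j^{m_j}\p_i)$, compute the right-hand side and get a recursion of the form
\[
m_j\,g_{m_j-1}(\p_j)\p_i v=\lambda_j^{-1}\big((\p_j+a_j+1)g_{m_j}(\p_j+1)-(\p_j+a_j+1-m_j)g_{m_j}(\p_j)\big)\p_i v \cdot\lambda_j^{m_j}
\]
(up to bookkeeping of the $\lambda_j$ powers and a shift — I'll be careful that the $\p_i$ factor passes through untouched since $\p_i$ commutes with everything here). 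Actually it is cleaner to run the recursion upward from $g_0=1$: set $k_j=1$ in the bracket (or use $t_j^{m_j+1}\p_i\cdot v=\frac1{?}[t_j\p_j,t_j^{m_j}\p_i]\cdot v$ — here $[t_j\p_j,t_j^{m_j}\p_i]=m_j t_j^{m_j+1}\p_i$, so $t_j^{m_j+1}\p_i\cdot v=\frac1{m_j}[t_j\p_j,t_j^{m_j}\p_i]\cdot v$ for $m_j\ge 1$), and separately handle the step from $m_j=0$ to $m_j=1$ using $k_j=-1$. (3) Solve the recursion: I claim $g_{m_j}(\p_j)=\lambda_j^{m_j}$ (a constant, no $\p_j$-dependence) satisfies it with initial condition $g_0=1$, and that the recursion determines $g_{m_j}$ uniquely. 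Plugging $g_{m_j}\equiv\lambda_j^{m_j}$ into the degree-one recursion above: $(\p_j+a_j+1)\lambda_j^{m_j}-(\p_j+a_j+1-m_j)\lambda_j^{m_j}=m_j\lambda_j^{m_j}$, matching $m_j g_{m_j-1}=m_j\lambda_j^{m_j-1}$ after the $\lambda_j^{-1}\cdot\lambda_j^{m_j}$ normalization — consistent. Uniqueness: the recursion expresses $g_{m_j}$ in terms of $g_{m_j-1}$ via a first-order difference equation whose homogeneous solutions are excluded by degree/polynomiality considerations (the operator $g\mapsto (\p_j+a_j+1)g(\p_j+1)-(\p_j+a_j+1-m_j)g(\p_j)$ raises degree by exactly one unless $g$ is constant times the right thing; comparing leading coefficients forces $\deg g_{m_j}=\deg g_{m_j-1}$, hence all constant by induction from $g_0=1$). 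This yields $t_j^{m_j}\p_i\cdot v=\lambda_j^{m_j}\p_i v$, which is (3.5).

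**Main obstacle.** The genuinely delicate point is step (1): rigorously showing that $t_j^{m_j}\p_i\cdot v$ has no dependence on the variables $\p_\ell$ for $\ell\ne i,j$ and is exactly linear in $\p_i$ with zero constant term. This is the analogue of Claims 1 and 2 in Lemma 4 but for the "mixed" generators $t_j^{m_j}\p_i$ rather than the "diagonal" ones, and it requires testing against brackets with various $t_\ell^{k_\ell}\p_\ell$ (using both alternatives (3.2)/(3.3) for index $\ell$) and reading off leading terms to kill spurious dependence — essentially the same leading-coefficient bookkeeping that dominates the proof of Lemma 4, so I would expect to either invoke that argument mutatis mutandis or reproduce its key estimates. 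Once (1) is in hand, the recursion in steps (2)–(3) is a short and forced computation. A secondary nuisance is keeping the powers of $\lambda_j$ and the argument shifts $\p_j\mapsto\p_j\pm 1$, $\p_j\mapsto\p_j-m_j$ straight, but that is purely mechanical.
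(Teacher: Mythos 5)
Your overall computational engine --- bracketing with $t_j^{-1}\p_j$ and $t_j\p_j$, solving the resulting first-order difference equation in $\p_j$ by comparing leading coefficients, and inducting on the exponent --- is the same as the paper's. The genuine gap is your step (1), on which steps (2)--(3) are premised: you assert, before running any recursion, that $t_j^{m_j}\p_i\cdot v\in\p_i\C[\p_j]v$ for every $m_j$, and the justification you give does not work. Since $t_j^{m_j}$ contains no $t_i$, the element $t_j^{m_j}\p_i$ commutes with $\p_i$ \emph{exactly} (no shift at all), so ``commuting with $\p_i$ up to the same shift'' imposes no condition whatsoever on the $\p_i$-dependence of $t_j^{m_j}\p_i\cdot v$; and Claims 1--2 in the proof of Lemma 4 concern the diagonal generators $t_i^{k_i}\p_i$ and the brackets $[t_i^{-1}\p_i,t_i^{k_i}\p_i]$, so they do not transfer ``with the roles reversed'' to yield linearity in $\p_i$ with zero constant term for the mixed generators. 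As written, the point you yourself flag as the main obstacle is left unproved, and for $m_j\ge 2$ it is essentially the whole content of the lemma.

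The repair is to drop the a priori form and apply your own difference-equation analysis to the full unknown polynomial, which is exactly the paper's route: write $t_j\p_i\cdot v=\lambda_j H v$ with $H$ a polynomial in the $\p_\ell$'s (the paper first eliminates the variables $\p_l$, $l\ne i,j$, using $[t_l^{-1}\p_l,t_j\p_i]=0$ and the two possible forms of $t_l^{-1}\p_l\cdot v$), and expand $\p_i v=[t_j^{-1}\p_j,t_j\p_i]\cdot v$ under (3.2) to get $(\p_j+a_j+1)H(\p_i,\p_j+1)-(\p_j+a_j)H(\p_i,\p_j)=\p_i$; decomposing $H$ over monomials in the variables other than $\p_j$ and comparing leading coefficients in $\p_j$ kills every component except the one on the monomial $\p_i$, whose coefficient must be $1$, so $H=\p_i$. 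Then $t_j^{k_j+1}\p_i\cdot v=\frac{1}{k_j}[t_j\p_j,t_j^{k_j}\p_i]\cdot v$ determines all higher cases by induction with no assumed shape needed --- this is your steps (2)--(3) once the $k_j=1$ seed is pinned down, so your recursion is sound; only the unsupported step (1) needs to be replaced by this treatment of the seed.
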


\begin{proof}
Denote $t_j\p_i\cdot v=\lambda_jH(\p_i,\p_j)v$, where $H(\p_i,\p_j)\in R_{ij}[\p_i,\p_j]$.
Using Lemma 5 and from $0=[t_l^{-1}\p_l, t_j\p_i]\cdot v$ for $1\le i\ne j\ne l\le n$
(two choices for $(t_l^{-1}\p_l)v$)  we see
that $H(\p_i,\p_j)\in \C[\p_i,\p_j]$. 
From $\p_i\cdot v=[t_j^{-1}\p_j,$
 $t_j\p_i]\cdot v$ we deduce  that
$$\p_iv=(H(\p_i,\p_j+1)(\p_j+a_j+1)-(\p_j+a_j)H(\p_i,\p_j))v,$$
yielding that $H(\p_i,\p_j)=\p_i$. Using
 $t_j^{k_j+1}\p_i\cdot v=\frac{1}{k_j}[t_j\p_j,t_j^{k_j}\p_i]\cdot v$
  and by induction on $k_j\in \N$ we can deduce that $t_j^{k_j}\p_i\cdot v=\lambda_j^{k_j}
  \p_iv.$ So  (3.4) holds. \end{proof}

\begin{lemma}  Let  $1\le i\ne j\le n$. If   (3.3) holds, then
\begin{equation}t_j^{k_j}\p_i\cdot v=\lambda_j^{k_j}\p_i(\Pi_{p=0}^{k_j-1}(\p_j+a_j-p))v,\,\forall\,\,  k_j\in \Z_{+}.\end{equation}
\end{lemma}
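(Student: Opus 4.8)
The plan is to follow the four moves of the proof of Lemma 6, the only genuinely new point being that under (3.3) the operator $t_j^{-1}\p_j$ sends $v$ to the scalar multiple $\lambda_j^{-1}v$ rather than to a degree‑one polynomial in $\p_j$, so that the single relation $\p_i=[t_j^{-1}\p_j,t_j\p_i]$ no longer determines the action of $t_j\p_i$; to compensate I will also use the actions of $t_j\p_j$ and $t_j^2\p_j$ prescribed by (3.3).

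First I would write $t_j\p_i\cdot v=\lambda_j H(\p_i,\p_j)v$ with $H\in R_{ij}[\p_i,\p_j]$ and show, exactly as in the opening of the proof of Lemma 6, that $H\in\C[\p_i,\p_j]$: for every $l\ne i,j$ we have $[t_l^{-1}\p_l,t_j\p_i]=0$ in $\W_n^+$, and, since by Lemma 5 the element $t_l^{-1}\p_l\cdot v$ equals $\lambda_l^{-1}(\p_l+a_l+1)v$ or $\lambda_l^{-1}v$, evaluating $0=[t_l^{-1}\p_l,t_j\p_i]\cdot v$ via (2.1) forces $H|_{\p_l\mapsto\p_l+1}=H$, so $H$ is free of $\p_l$. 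Next, from $\p_i=[t_j^{-1}\p_j,t_j\p_i]$ and $t_j^{-1}\p_j\cdot v=\lambda_j^{-1}v$ (the first line of (3.3)), formula (2.1) gives $\p_i v=\bigl(H(\p_i,\p_j+1)-H(\p_i,\p_j)\bigr)v$, hence $H(\p_i,\p_j+1)-H(\p_i,\p_j)=\p_i$; as $H\in\C[\p_i,\p_j]$ this yields $H(\p_i,\p_j)=\p_i\p_j+\beta(\p_i)$ for a unique $\beta(\p_i)\in\C[\p_i]$.

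The heart of the proof is the claim $\beta(\p_i)=a_j\p_i$. From $[t_j\p_j,t_j\p_i]=t_j^2\p_i$ and the $k_j=1$ case of (3.3) (that is, $t_j\p_j\cdot v=\lambda_j(\p_j-a_j-1)(\p_j+a_j)v$), formula (2.1) gives
\[
t_j^2\p_i\cdot v=\lambda_j^2\Bigl(H(\p_i,\p_j-1)(\p_j-a_j-1)(\p_j+a_j)-(\p_j-a_j-2)(\p_j+a_j-1)H(\p_i,\p_j)\Bigr)v ,
\]
and then I would compute $t_j^3\p_i\cdot v$ in the two ways $[t_j^2\p_j,t_j\p_i]\cdot v=\tfrac12[t_j\p_j,t_j^2\p_i]\cdot v$, using in the first the $k_j=2$ case of (3.3) (namely $t_j^2\p_j\cdot v=\lambda_j^2(\p_j-2a_j-2)(\p_j+a_j)(\p_j+a_j-1)v$) and in the second the expression just displayed. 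Equating the two and inserting $H=\p_i\p_j+\beta(\p_i)$ yields a polynomial identity which has the form $\p_i\,A(\p_j)+\beta(\p_i)\,B(\p_j)=0$ with $A,B$ explicit polynomials in $\p_j$, and hence forces $4a_j(a_j+1)\bigl(a_j\p_i-\beta(\p_i)\bigr)=0$; for $a_j\notin\{0,-1\}$ this gives $\beta(\p_i)=a_j\p_i$, and in the remaining cases $a_j\in\{0,-1\}$ one invokes $\Gamma(\lambda_j,0)\cong\Gamma(\lambda_j,-1)$ to normalize $a_j$, exactly as in the remark following Lemma 5. (Alternatively one may first reduce to $\beta(\p_i)=\beta_1\p_i$ for a scalar $\beta_1$ by a short degree count on the functional equation coming from $t_it_j\p_i=-[t_i\p_i,t_j\p_i]$, $2t_j\p_i=[t_i^{-1}\p_i,t_it_j\p_i]$ and Lemma 5 applied to the coordinate $i$; then the commutativity $[t_j\p_i,t_j^2\p_i]=0$ gives $(\beta_1-a_j)(\beta_1+a_j+1)=0$, and the $t_j^3\p_i$‑comparison above eliminates the spurious value $\beta_1=-(a_j+1)$.) I expect this step — carrying out these low‑degree commutator computations, in the spirit of the $b\ne0$ analysis in the proof of Theorem 2, and in particular ruling out the value $-(a_j+1)$ that survives the commutativity relation alone — to be the main obstacle. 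We now have $t_j\p_i\cdot v=\lambda_j\p_i(\p_j+a_j)v$, i.e.\ (3.5) for $k_j=1$ ($k_j=0$ being the trivial identity $\p_i\cdot v=\p_i v$).

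Finally I would induct on $k_j\ge1$ using $t_j^{k_j+1}\p_i=\tfrac1{k_j}[t_j\p_j,t_j^{k_j}\p_i]$. Writing $P_k(\p_j)=\Pi_{p=0}^{k-1}(\p_j+a_j-p)$, the inductive hypothesis $t_j^{k_j}\p_i\cdot v=\lambda_j^{k_j}\p_i P_{k_j}(\p_j)v$ and the $k_j=1$ case of (3.3) give, through (2.1),
\[
[t_j\p_j,t_j^{k_j}\p_i]\cdot v=\lambda_j^{k_j+1}\p_i\Bigl(P_{k_j}(\p_j-1)(\p_j-a_j-1)(\p_j+a_j)-(\p_j-k_j-a_j-1)(\p_j-k_j+a_j)P_{k_j}(\p_j)\Bigr)v .
\]
Since $P_{k_j}(\p_j)=(\p_j+a_j)Q$ and $P_{k_j}(\p_j-1)=(\p_j+a_j-k_j)Q$ with $Q=\Pi_{p=1}^{k_j-1}(\p_j+a_j-p)$, and since
\[
(\p_j+a_j-k_j)(\p_j-a_j-1)-(\p_j-k_j-a_j-1)(\p_j-k_j+a_j)=k_j(\p_j+a_j-k_j),
\]
the quantity in large parentheses equals $k_j(\p_j+a_j)Q(\p_j+a_j-k_j)=k_j P_{k_j+1}(\p_j)$; dividing by $k_j$ gives $t_j^{k_j+1}\p_i\cdot v=\lambda_j^{k_j+1}\p_i P_{k_j+1}(\p_j)v$, closing the induction and establishing (3.5). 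The argument is unchanged for $n=\infty$, since only finitely many $\p_l$ ever occur.
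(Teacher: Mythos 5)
Your non-degenerate analysis is correct and follows a genuinely different route from the paper's. You determine $t_j\p_i\cdot v$ using only commutators inside the $j$-direction: writing $t_j\p_i\cdot v=\lambda_j(\p_i\p_j+\beta(\p_i))v$, computing $t_j^2\p_i\cdot v$ from $[t_j\p_j,t_j\p_i]$, and then comparing the two evaluations $[t_j^2\p_j,t_j\p_i]\cdot v=\tfrac12[t_j\p_j,t_j^2\p_i]\cdot v$ of $t_j^3\p_i\cdot v$. I checked this comparison: it does reduce exactly to $4a_j(a_j+1)\bigl(a_j\p_i-\beta(\p_i)\bigr)=0$, and your induction step via $t_j^{k_j+1}\p_i=\tfrac1{k_j}[t_j\p_j,t_j^{k_j}\p_i]$ is also correct. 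The paper instead pins down $t_j\p_i\cdot v$ through commutators mixing the $i$- and $j$-directions (its Claims 1 and 2 and equations (3.6)--(3.10)); your route is shorter where it applies.

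The genuine gap is the degenerate case $a_j\in\{0,-1\}$, which is exactly where the paper spends most of its effort. There your key identity is vacuous, so $\beta(\p_i)$ is left completely undetermined, and invoking $\Gamma(\lambda_j,0)\cong\Gamma(\lambda_j,-1)$ ``to normalize $a_j$'' cannot repair this: the isomorphism merely relabels the parameter in (3.3) and carries no information about $t_j\p_i\cdot v$. Likewise, the claim in your parenthetical alternative that the $t_j^3\p_i$-comparison eliminates the root $\beta_1=-(a_j+1)$ fails precisely when $a_j\in\{0,-1\}$ -- and it must fail, since both $\beta=a_j\p_i$ and $\beta=-(a_j+1)\p_i$ genuinely occur under hypothesis (3.3) with such $a_j$ (compare $\Omega(\Lambda_n,0,S)$ and $\Omega(\Lambda_n,-1,S)$ with $j\in S$: they induce the same formula (3.3) but $t_j\p_i\cdot 1$ equals $\lambda_j x_ix_j$ in one and $\lambda_j x_i(x_j-1)$ in the other); the spurious root can only be removed by re-choosing the representative $a_j$, which is what the paper does in its final two paragraphs. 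Your own ingredients do suffice to close the gap: the relation $[t_j\p_i,t_j^2\p_i]=0$ yields $\bigl(\beta(\p_i)-a_j\p_i\bigr)\bigl(\beta(\p_i)+(a_j+1)\p_i\bigr)=0$ with no linearity assumption (since $\C[\p_i]$ is a domain), hence $\beta\in\{a_j\p_i,\,-(a_j+1)\p_i\}$; for $a_j\notin\{0,-1,-\tfrac12\}$ your main identity excludes the second root, for $a_j=-\tfrac12$ the roots coincide, and for $a_j\in\{0,-1\}$ one has $\{a_j,-(a_j+1)\}=\{0,-1\}$, so $\beta=a_j'\p_i$ for some $a_j'\in\{0,-1\}$; since (3.3) holds verbatim with $a_j'$ in place of $a_j$, replacing $a_j$ by $a_j'$ gives the $k_j=1$ case of (3.5), and your induction then finishes. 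As written, however, this step is asserted rather than proved.
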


\begin{proof}  Denote $t_j\p_i\cdot v=\lambda_jH(\p_i,\p_j)v$, where $H(\p_i,\p_j)\in R_{ij}[\p_i,\p_j]$.
As discussed in the proof of Lemma 6, we see
that $H(\p_i,\p_j)\in \C[\p_i,\p_j]$.
From $\p_i\cdot v=[t_j^{-1}\p_j,t_j\p_i]\cdot v$ we see that
$\p_i v=(H(\p_i,\p_j+1)-H(\p_i,\p_j))v,$ yielding that
$H(\p_i,\p_j)=(\p_j+a_j)\p_i+\psi(\p_i)$ for $\psi(\p_i)\in \C[\p_j].$

 \

{\bf{Claim 1.}}  We have $t_j\p_i\cdot v=\lambda_j(\p_j+a_j+ b )\p_iv$ for some $ b \in \C.$

We will prove this in two steps corresponding to the two choices for
$t_i^{k_i}\p_i\cdot v$ as in Lemma 5.

{\bf Case 1.} (3.2) holds with $j$ replaced by $i$.

We have
\begin{equation}\begin{aligned}&t_{j}t_i^{k_i}\p_i\cdot v
=\frac{1}{k_i}(t_j\p_i\cdot t_i^{k_i}\p_i-t_i^{k_i}\p_i\cdot t_j\p_i)\cdot v\\
=&\lambda_i^{k_i}\lambda_j(\p_i-k_i(a_i+1))\big((\p_j+a_j)+\frac{1}{k_i}(\psi(\p_i)-\psi(\p_i-k_i))\big)v.\\
\end{aligned}\end{equation}
Then we have
$$\begin{aligned}&t_{j}t_i^{k_i}\p_i\cdot v
=\frac{1}{k_i+2}(t_i^{-1}\p_i\cdot t_{j}t_i^{k_i+1}\p_i-t_{j}t_i^{k_i+1}\p_i \cdot t_i^{-1}\p_i)\cdot v\\
=&\frac{\lambda_i^{k_i}\lambda_j}{k_i+2}\Big((\p_i+1-(k_i+1)(a_i+1))(\p_i+(a_i+1))\\
&\cdot\big((\p_j+a_j)+\frac{1}{k_i+1}
(\psi(\p_i+1)-\psi(\p_i-k_i))\big)\\
&-(\p_i-(k_i+1)(a_i+1))(\p_i+a_i-k_i)\\
&\cdot\big((\p_j+a_j)+\frac{1}{k_i+1}
(\psi(\p_i)-\psi(\p_i-k_i-1))\big)\Big)v.
\end{aligned}$$
Comparing with (3.6) we see that
\begin{equation}\begin{aligned}&\frac{(k_i+1)(k_i+2)}{k_i}(\p_i-k_i(a_i+1))(\psi(\p_i)-\psi(\p_i-k_i))\\
=&(\p_i+1-(k_i+1)(a_i+1))(\p_i+(a_i+1))
(\psi(\p_i+1)-\psi(\p_i-k_i))\\
&-(\p_i-(k_i+1)(a_i+1))(\p_i+a_i-k_i)
(\psi(\p_i)-\psi(\p_i-k_i-1)),\end{aligned}\end{equation}
for all $k_i\in\Z_{\ge -1}, k_i\ne 0$. We see that (3.7) holds for all $k_i\in\Z$.
Taking $k_i=-2$ we deduce that
$\psi(\p_i+1)-\psi(\p_i+2)=\psi(\p_i)-\psi(\p_i+1)$, yielding
$\psi(\p_i)= b \p_i+ c$ for some $b,c\in \C$.
 From (3.6) with $k_i=1$ we obtain that $$t_jt_i\p_i\cdot v=\lambda_i\lambda_j(\p_i-a_i-1)(\p_j+a_j+ b )v.$$
Noting that $t_j\p_i\cdot v=\frac{1}{2}[t_i^{-1}\p_i,t_jt_i\p_i]\cdot v$, after simple computation we have
$$\lambda_j((\p_j+a_j)\p_i+( b \p_i+ c ))v=\lambda_j\p_i(\p_j+a_j+ b )v,$$
yielding $ c =0$ and $\psi(\p_i)= b \p_i$.

\

{\bf{Case 2.}} (3.3) holds with $j$ replaced by $i$.

In this case, we have\begin{equation}\begin{aligned}&t_{j}t_i^{k_i}\p_i\cdot v
=\frac{1}{k_i}(t_j\p_i\cdot t_i^{k_i}\p_i-t_i^{k_i}\p_i\cdot t_j\p_i)\cdot v\\
=&\lambda_i^{k_i}\lambda_j(\p_i-k_i(a_i+1))\big((\p_j+a_j)+\frac{1}{k_i}(\psi(\p_i)-\psi(\p_i-k_i))\big)\\
&\cdot \Pi_{p=0}^{k_i-1}(\p_i+a_i-p) v.\\
\end{aligned}\end{equation}
Then
$$\begin{aligned}&t_{j}t_i^{k_i}\p_i\cdot v
=\frac{1}{k_i+2}(t_i^{-1}\p_i\cdot t_{j}t_i^{k_i+1}\p_i-t_{j}t_i^{k_i+1}\p_i \cdot t_i^{-1}\p_i)\cdot v\\
=&\frac{\lambda_i^{k_i}\lambda_j}{k_i+2}\Big((\p_i+1-(k_i+1)(a_i+1))(\p_i+(a_i+1))\\
&\cdot\big((\p_j+a_j)+\frac{1}{k_i+1}
(\psi(\p_i+1)-\psi(\p_i-k_i))\big)\\
&-(\p_i-(k_i+1)(a_i+1))(\p_i+a_i-k_i)\\
&\cdot\big((\p_j+a_j)+\frac{1}{k_i+1}
(\psi(\p_i)-\psi(\p_i-k_i-1))\big)\Big)\Pi_{p=0}^{k_i-1}(\p_i+a_i-p)v.
\end{aligned}$$ Comparing with equation (3.8) we also have that equation (3.7) holds. By similar arguments we have $\psi(\p_i)= b \p_i$.  Claim 1 follows.

\

 If $ b =0$, from $t_j^{k_j}\p_i\cdot v=[t_j^{k_j-1}\p_j,t_j\p_i]\cdot v$ with $ k_j\in \N,$ we deduce (3.5).
Next we assume that $ b \ne 0$.
For $k_j\ge2$ we have
\begin{equation}\begin{aligned}&t_j^{k_j}\p_i\cdot v
=(t_j^{k_j-1}\p_j\cdot t_j\p_i-t_j\p_i\cdot t_j^{k_j-1}\p_j)\cdot v\\
=&\lambda_j^{k_j}\p_i\big(\Pi_{p=1}^{k_j-2}(\p_j+a_j-p)\big)\Big(\big((\p_j+a_j-k_j+1)\\
&\cdot(\p_j+a_j)(\p_j-(k_j-1)(a_j+1))\\
&-(\p_j+a_j-k_j+1)(\p_j-(k_j-1)(a_j+1)-1)(\p_j+a_j)\big)\\
&+ b \big((\p_j+a_j)(\p_j-(k_j-1)(a_j+1))\\
&-(\p_j+a_j-k_j+1)(\p_j-(k_j-1)(a_j+1)-1)\big)\Big)v\\
=&\lambda_j^{k_j}\p_i\big( b  k_j(\p_j-(k_j-2)(a_j+1)-1)\Pi_{p=1}^{k_j-2}(\p_j+a_j-p)\\
&+\Pi_{p=0}^{k_j-1}(\p_j+a_j-p)\big)v.
\end{aligned}\end{equation}
Using this formula we see that
$$\begin{aligned}&t_j^{k_j+1}\p_i\cdot v=\frac{1}{k_j}(t_j\p_j\cdot t_j^{k_j}\p_i-t_j^{k_j}\p_i\cdot t_j\p_j )\cdot v\\
=&\lambda_j^{k_j+1}\p_i\Big(\Pi_{p=0}^{k_j}(\p_j+a_j-p)\\
&+ b  (k_j+1)(\p_j-(k_j-1)(a_j+1)-1)\cdot\Pi_{p=1}^{k_j-1}(\p_j+a_j-p)\\
&+2 b a_j(a_j+1)(k_j-1)(2\p_j+2a_j-k_j)\Pi_{p=2}^{k_j-2}(\p_j+a_j-p)\Big)v.
\end{aligned}$$ Comparing with equation (3.9) we deduce that
$$2 b \big(a_j(a_j+1)(k_j-1)(2\p_j+2a_j-k_j)\big)\Pi_{p=2}^{k_j-2}(\p_j+a_j-p)=0,{\rm for }\,\,k_j\ge2,$$
which forces that $$a_j(a_j+1)(k_j-1)=0.$$
Hence $a_j=0$ or $-1.$ Formula (3.9) becomes
\begin{equation}t_j^{k_j}\p_i\cdot v=\lambda_j^{k_j}\p_i(\p_j+k_j(a_j+ b ))\Pi_{p=1}^{k_j-1}(\p_j-p)v, \ {\rm for }\,\,k_j\ge2.\end{equation}

\

{\bf{Claim 2.}} If $a_j=0,$ then $ b =-1$; and if $a_j=-1,$ then $ b =1.$

We will prove this claim in two cases corresponding to the two choices for $t_i^{k_i}\p_i\cdot v$ as in Lemma 5.

{\bf{Case 1.}} (3.2) holds with $j$ replaced by $i$.

In this case, by Lemma 6, equations (3.10) and  (3.11) we have
$$t_it_j\p_i\cdot v=(t_j\p_i\cdot t_i\p_i-t_i\p_i\cdot t_j\p_i)\cdot v
=\lambda_i\lambda_j(\p_i-(a_i+1))(\p_j+(a_j+ b ))v,
$$
$$t_it_j\p_j\cdot v=(t_i\p_j\cdot t_j\p_j-t_j\_j\cdot t_i\p_j)\cdot v
=\lambda_i\lambda_j\p_j(\p_j-1)v.
$$ Then
$$\begin{aligned}\lambda_i\lambda_j&\big((\p_i-(a_i+1))(\p_j+(a_j+ b ))-\p_j(\p_j-1)\big)v\\
=&t_it_j(\p_i-\p_j)\cdot v=[t_i\p_j,t_j\p_i]\cdot v\\
=&\lambda_i\lambda_j(\p_i-\p_j)(\p_j+a_j+ b )v,
\end{aligned}$$
which means that $$(a_j+ b )-a_i=(a_i+1)(a_j+ b )=0.$$
We easily deduce that if $a_j=0,$ then $ b =-1$; and if $a_j=-1,$ then $ b =1.$ The claim follows in this case.

{\bf{Case 2.}} Equation (3.3) holds with $j$ replaced by $i$.

By Claim 1 we know that
$$t_i\p_j\cdot v=\lambda_i\p_j(\p_i+a_i+c_i)v$$
for some $c_i\in \C.$ Then
$$t_it_j\p_i\cdot v=\lambda_i\lambda_j(\p_i+a_i)(\p_i-(a_i+1))(\p_j+a_j+ b )v,$$
$$t_it_j\p_j\cdot v=\lambda_i\lambda_j(\p_j+a_j)(\p_j-(a_j+1))(\p_i+a_i+c_i)v.$$
So
$$\begin{aligned}\lambda_i\lambda_j&\big((\p_i+a_i)(\p_i-(a_i+1))(\p_j+a_j+ b )\\
&-(\p_j+a_j)(\p_j-(a_j+1))(\p_i+a_i+c_i)\big)v\\
=&t_it_j(\p_i-\p_j)\cdot v=[t_i\p_j,t_j\p_i]\cdot v\\
=&\lambda_i\lambda_j(\p_i-\p_j)(\p_i+a_i+c_i)(\p_j+a_j+ b )v.
\end{aligned}$$ Computing the coefficients of $\p_i\p_j$ and $\p_i$ we see that
$$(a_j+ b )-(a_i+c_i)=0,\,\,\, a_ia_j+a_ib-a_j^2+a_jc+bc+b=0.$$
Noting that $a_j=0,-1$, we  deduce that if $a_j=0,$ then $ b =-1$; and if $a_j=-1,$ then $ b =1.$ The claim follows.

\

First we assume that  $a_j=0$ and $ b =-1$. we can rewrite (3.10) as follows   $$t_j^{k_j}\p_i\cdot v=\lambda_j^{k_j}\p_i\Pi_{p=1}^{k_j}(\p_j-p)v,\ k_j\in \N.$$
As in the remark after Lemma 3.5, we retake $a_j=-1$. Then $b$ is changed to $0$ in Claim 1 and (3.5) follows with the new value of $a_j$ in this case.

At last, we assume that  $a_j=-1$ and $ b =1$. we can rewrite (3.10) as follows   $$t_j^{k_j}\p_i\cdot v=\lambda_j^{k_j}\p_i\Pi_{p=0}^{k_j-1}(\p_j-p)v,\ k_j\in \N.$$
As in the remark after Lemma 3.5, we retake $a_j=0$. Then $b$ is changed to $0$ in Claim 1 and (3.5) follows with the new value of $a_j$ in this case. This completes the proof.
\end{proof}

Now we are ready to prove our main result in this paper.

\begin{theorem}Let $M$ be a $\W_n^+$-module which is a free $U(\h_n)$-module of  rank $1$. Then
$M\cong \Omega(\Lambda_{n},a, S) $ for some
$\Lambda_n=(\lambda_1,\cdots,\lambda_n) \in (\C^*)^n, a\in \C$, and
$S\subset \{1,2,..., n\}$.
\end{theorem}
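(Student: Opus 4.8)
The plan is to reduce the general $\W_n^+$-module $M=U(\h_n)v$ to data already computed in Lemmas 4--7, and then identify that data with the parameters $(\Lambda_n,a,S)$. First I would invoke Lemma 5 to attach to each index $j$ a scalar $\lambda_j\in\C^*$ and $a_j\in\C$, and record whether $j$ is of ``type (3.2)'' or ``type (3.3)''. Using the isomorphism $\Gamma(\lambda,0)\cong\Gamma(\lambda,-1)$ (the remark after Lemma 5) and Claim 2 in the proof of Lemma 7, I would first normalize all the $a_j$ to a common value: the key point is that if $j$ is of type (3.3) then the cross-relations force $a_j\in\{0,-1\}$ and moreover $a_j$ can be slid to match the $a_i$ coming from any type-(3.2) index (resp.\ any other type-(3.3) index), so after this normalization there is a single $a\in\C$ with $t_j^{k_j}\p_j\cdot v=\lambda_j^{k_j}(\p_j-k_j(a+1))v$ for $j\notin S$ and the product formula (3.3) for $j\in S$, where $S\subseteq\{1,\dots,n\}$ is precisely the set of type-(3.3) indices. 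Set $\Lambda_n=(\lambda_1,\dots,\lambda_n)$.

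The second step is to pin down the mixed actions $t^k\p_i\cdot v$ for a general multi-index $k\in\Z^n_{+,i}$. Lemmas 6 and 7 already give $t_j^{k_j}\p_i\cdot v$ for a single $j\ne i$: it is $\lambda_j^{k_j}\p_i v$ when $j\notin S$ and $\lambda_j^{k_j}\p_i\bigl(\Pi_{p=0}^{k_j-1}(\p_j+a-p)\bigr)v$ when $j\in S$. To build the general case I would factor $t^k\p_i=t_{i}^{k_i}\p_i$ times the commuting monomials $t_j^{k_j}$ for $j\ne i$, and propagate the single-variable formulas by repeatedly using $t_j^{k_j}t_l^{k_l}\p_i\cdot v=\frac{1}{k_j}[t_j\p_j,\,t_j^{k_j-1}t_l^{k_l}\p_i]\cdot v$-type identities (exactly the inductive device used throughout Lemmas 4--7), together with Lemma 1's shift rule $t^k\p_i\cdot f(\h)v=f(\h-k)(t^k\p_i\cdot v)$. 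The outcome should be
$$t^k\p_i\cdot v=\Lambda_n^k\,\phi_{S,a}(k,i)\,v,$$
with $\phi_{S,a}(k,i)$ exactly the function in Definition~2, including the special value when $k_i=-1$ and $i\in S$ (that case is governed by the type-(3.3) formula (3.3) at $k_j=-1$, where $\p_j-k_j(a+1)$ and the empty/short product conspire to give the $\Pi_{q\in S\setminus\{i\}}$ expression). Once this holds on all $t^k\p_i\cdot v$, Lemma 1 upgrades it to the action on all of $M=\C[\p_1,\dots,\p_n]v$, and the map $v\mapsto 1$ is then a $\W_n^+$-module isomorphism $M\xrightarrow{\ \sim\ }\Omega(\Lambda_n,a,S)$.

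I expect the main obstacle to be the bookkeeping in the second step: verifying that the products of single-variable factors assemble into precisely $\phi_{S,a}(k,i)$ and not some twisted variant, and in particular checking consistency of the two expressions one gets for $t^k\p_i\cdot v$ depending on the order in which the variables $t_j$ are peeled off. This is where a Jacobi-identity / commuting-actions compatibility check is unavoidable; concretely one must confirm that $[t^k\p_i,t^l\p_j]\cdot v$ computed from the candidate formula matches $l_i\,t^{k+l}\p_j\cdot v-k_j\,t^{k+l}\p_i\cdot v$, which forces the coefficients to be multiplicative in the way Definition~2 prescribes and rules out any residual freedom. The normalization juggling of $a_j\in\{0,-1\}$ in the first step is delicate but is already fully carried out inside the proof of Lemma 7, so I would simply cite it; everything else is the same commutator-induction machinery used repeatedly above, now run in $n$ variables simultaneously.
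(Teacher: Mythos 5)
Your first step contains a genuine gap: the existence of a single common $a$ is not a normalization issue and cannot be obtained from the remark after Lemma 5 together with Claim 2 inside the proof of Lemma 7. For an index $j$ of type (3.2) the constant $a_j$ is an arbitrary complex number with no sliding available at all, and even for $j$ of type (3.3) the constraint $a_j\in\{0,-1\}$ is not a consequence of membership in $S$ --- it arises only in the sub-case $b\ne 0$ of the internal argument of Lemma 7 (when $b=0$ the parameter $a_j$ is unconstrained), and in the target module $\Omega(\Lambda_n,a,S)$ the indices in $S$ carry the same arbitrary $a$ as those outside $S$. Thus after Lemmas 5--7 the constants $a_1,\dots,a_n$ are a priori unrelated, and the equality $a_i=a_j$ is exactly the substantive content of the paper's proof of this theorem: one computes $t_it_j\p_i\cdot v$ and $t_it_j\p_j\cdot v$ via commutators and compares $t_it_j(\p_j-\p_i)\cdot v=[t_j\p_i,t_i\p_j]\cdot v$ in three cases ($i,j\notin S$; $i,j\in S$; $i\notin S$, $j\in S$), each time forcing $a_i=a_j$. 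Your plan never performs (or even isolates) this cross-index comparison; the ``consistency check'' you defer to the second step is phrased as a check on an already-assumed formula with a single $a$, so the crucial constraint is assumed rather than derived.

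A secondary remark: your second step is much heavier than necessary. The paper avoids computing $t^k\p_i\cdot v$ for general multi-indices $k$ and avoids any Jacobi verification by observing that $\W_n^+$ is generated as a Lie algebra by the single-variable elements $t_i^{k_i}\p_i$ ($k_i\in\Z_{\ge-1}$) and $t_j^{k_j}\p_i$ ($j\ne i$, $k_j\in\Z_+$); by Lemma 1 the module structure is determined by the action of these generators on $v$, and once that action coincides with the action on $1$ in the already-constructed module $\Omega(\Lambda_n,a,S)$, the isomorphism follows with no further bookkeeping. If you repair the first step by adding the three-case commutator argument, you can then discard most of your second step in favour of this generation argument.
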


\begin{proof}
Since $M$ is a free $U(\h_n)$-module of rank 1, there is a nonzero element $v\in M$
such that $M=U(\h_n)v=\C[\partial_1,\partial_2,\cdots,\partial_n]v.$
Since $\W_n^{+}$ is generated by $t_i^{k_i}\p_i, t_j^{k_j}\p_i$ for $ 1\le
i\ne j\le n, k_i\in \Z_{\ge -1}, k_j\in \Z_+,$ by Lemma 1 we know
 that the structure of the $\W_n^{+}$-module $M$ is determined by the action
  of the elements $t_i^{k_i}\p_i, t_j^{k_j}\p_i$ for $ 1\le i\ne j\le n, k_i\in \Z_{\ge -1}, k_j\in \Z_+$ on $v$.

From Lemmata 5, 6 and 7, there is $\lambda_j\in \C^*, a_j\in \C$ so that Lemmata 5, 6, and 7 hold.
Let $S$ be the subset of $\{1,2,...,n\}$ consisting of all $j$ such that (3.3) holds. Next we need only to prove that
$a_i=a_j$ for all   $1\le i\ne j\le n$. We do this in three cases:  (i). $i, j\notin S$;
(ii).  $i, j\in S$, (iii). $i\notin S$ and $j\in S$.

For Case (i), by Lemmata 5 and 6 we deduce that
$$\begin{aligned}
t_it_j\p_i\cdot v
&=(t_j\p_i\cdot t_i\p_i-t_i\p_i\cdot t_j\p_i)\cdot v
=\lambda_i\lambda_j(\p_i-(a_i+1))v,\\
t_it_j\p_j\cdot v
&=\lambda_i\lambda_j(\p_j-(a_j+1))v.\end{aligned}$$ We have
$$\lambda_i\lambda_j(\partial_j-\partial_i+(a_i-a_j))v
=t_it_j(\p_j-\p_i)\cdot v
=[t_j\p_i, t_i\p_j]\cdot v
=\lambda_i\lambda_j(\partial_j-\partial_i)v,
$$ yielding that $a_i=a_j.$

For Case (ii), by Lemmata 5 and 7 we deduce that
$$\begin{aligned}
t_it_j\p_i\cdot v
&=(t_j\p_i\cdot t_i\p_i-t_i\p_i\cdot t_j\p_i)\cdot v\\
&=\lambda_i\lambda_j(\p_i-(a_i+1))(\p_i+a_i)(\p_j+a_j)v,\\
t_it_j\p_j\cdot v
&=\lambda_i\lambda_j(\p_j-(a_j+1))(\p_i+a_i)(\p_j+a_j)v.\end{aligned}$$ We have
$$\begin{aligned}\lambda_i\lambda_j&(\partial_j-\partial_i+(a_i-a_j))(\p_i+a_i)(\p_j+a_j)v
=t_it_j(\p_j-\p_i)\cdot v\\
&=[t_j\p_i, t_i\p_j]\cdot v\\
&=\lambda_i\lambda_j(\partial_j-\partial_i)(\p_i+a_i)(\p_j+a_j)v,\end{aligned}
$$ yielding that $a_i=a_j.$

For Case (iii), by Lemmata 5, 6 and 7 we deduce that
$$\begin{aligned}
t_it_j\p_i\cdot v
&=(t_j\p_i\cdot t_i\p_i-t_i\p_i\cdot t_j\p_i)\cdot v\\
&=\lambda_i\lambda_j(\p_i-(a_i+1))(\p_j+a_j)v,\\
t_it_j\p_j\cdot v
&=\lambda_i\lambda_j(\p_j-(a_j+1))(\p_j+a_j)v.\end{aligned}$$ We have
$$\begin{aligned}\lambda_i\lambda_j&(\partial_j-\partial_i+(a_i-a_j))(\p_j+a_j)v
=t_it_j(\p_j-\p_i)\cdot v\\
&=[t_j\p_i, t_i\p_j]\cdot v\\
&=\lambda_i\lambda_j(\partial_j-\partial_i)(\p_j+a_j)v,\end{aligned}
$$ yielding that $a_i=a_j.$

So $a_i=a_j$ for all $ 1\le i\ne j\le n.$ Denote $a=a_i, 1\le i\le n.$
 We know that the action
  of the elements $t_i^{k_i}\p_i, t_j^{k_j}\p_i$ for $ 1\le i\ne j\le n, k_i\in \Z_{\ge -1}, k_j\in \Z_+$ on $v$ is the same
  as the action on $1$ in $\Omega(\Lambda_{n},a, S) $. Thus $M\cong \Omega(\Lambda_{n},a, S) $.\end{proof}

For $\W_n$-module structure on $U(\h_n)$, we have the following

\begin{theorem}Let $M$ be a $\W_n$-module which is a free $U(\h_n)$-module of  rank $1$. Then
$M\cong \Omega(\Lambda_n,a)$ for some $\Lambda_n\in (\C^*)^n$ and some $a\in
\C.$
\end{theorem}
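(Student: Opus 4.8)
The plan is to reduce Theorem 9 to Theorem 8 and Theorem 3. For $n=1$ the algebra $\W_1$ is the centerless Virasoro algebra, so $M$ is a $\V$-module on which the center acts by $0$, and Theorem 3 gives $M\cong\Omega(\lambda,a)=\Omega(\Lambda_1,a)$ at once; so I assume $n\geq 2$, the case $n=\infty$ being treated in exactly the same way. Since $\W_n^+$ is a subalgebra of $\W_n$, the restriction $M|_{\W_n^+}$ is again a free $U(\h_n)$-module of rank $1$, so by Theorem 8 we may pick a generator $v$ with $M=U(\h_n)v=\C[\partial_1,\dots,\partial_n]v$ such that the action of $\W_n^+$ on $v$ is precisely the action of $\W_n^+$ on $1\in\Omega(\Lambda_n,a,S)$ for some $\Lambda_n=(\lambda_1,\dots,\lambda_n)\in(\C^*)^n$, $a\in\C$ and $S\subseteq\{1,\dots,n\}$. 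What remains is to show $S=\emptyset$ and that every $t^k\partial_i$ acts on $v$ exactly as in $\Omega(\Lambda_n,a)$.

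The heart of the argument is to show that for each $j$ the subspace $\C[\partial_j]v$ is a submodule for the subalgebra $\mathfrak{g}_j:=\Der\,\C[t_j^{\pm1}]=\bigoplus_{k\in\Z}\C\,t_j^k\partial_j\subseteq\W_n$, which is isomorphic to $\W_1$ via $t_j^k\partial_j\mapsto d_k$. Exactly as in Lemma 1, simplicity of $\W_n$ gives $t^m\partial_i\cdot v\neq 0$ for $m\neq0$ and $t^m\partial_i\cdot f(\partial)v=f(\partial-m)(t^m\partial_i\cdot v)$ for all $m\in\Z^n$; write $t_j^k\partial_j\cdot v=h_k(\partial_1,\dots,\partial_n)v$. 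For $l\neq j$ one has $[t_l^{-1}\partial_l,\,t_j^k\partial_j]=0$, while $t_l^{-1}\partial_l\in\W_n^+$ acts on $v$, by Lemma 5, through a nonzero polynomial in $\partial_l$ alone; applying the vanishing bracket to $v$ and using the shift identity forces $h_k(\dots,\partial_l+1,\dots)=h_k(\dots,\partial_l,\dots)$, hence $h_k$ is independent of $\partial_l$. Since this holds for all $l\neq j$, $h_k\in\C[\partial_j]$, so $\C[\partial_j]v$ is a $\mathfrak{g}_j$-submodule that is free of rank $1$ over $\C[\partial_j]$. Regarding it as a $\V$-module with trivial central action, Theorem 3 yields $\C[\partial_j]v\cong\Omega(\lambda_j',a_j')$, in which every $t_j^k\partial_j$ with $k\neq0$ acts on $v$ by a polynomial of degree $1$ in $\partial_j$. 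However, if $j\in S$ then the $\W_n^+$-action (Definition 2) makes $t_j^2\partial_j$ act on $v$ by $\lambda_j^2(\partial_j-2(a+1))(\partial_j+a)(\partial_j+a-1)v$, a polynomial of degree $3$ --- a contradiction. Hence $S=\emptyset$, so $M|_{\W_n^+}\cong\Omega(\Lambda_n,a)$; and comparing the two formulas for the action of $t_j^k\partial_j$ gives $\lambda_j'=\lambda_j$, $a_j'=a$, so in fact $t_j^k\partial_j\cdot v=\lambda_j^k(\partial_j-k(a+1))v$ for all $k\in\Z$ and all $j$.

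To conclude, I would determine $t^k\partial_i\cdot v$ for an arbitrary $k\in\Z^n$. The Lie algebra $\W_n$ is generated by $\W_n^+$ together with $\{t_i^{-2}\partial_i:1\le i\le n\}$: the brackets $[\,t_i^{-m}\partial_i,\,t_i^{-1}\partial_i\,]$ give all $t_i^{-m}\partial_i$ with $m\geq1$, then $[\,t_i^{-m}\partial_i,\,t_i\partial_j\,]$ gives all $t_i^{-m}\partial_j$ with $j\neq i$, and iterating brackets among these produces every monomial $t^k\partial_i$. The action on $v$ of each of these generators has now been computed (the new ones $t_i^{-2}\partial_i$ by the previous paragraph), so by the $\W_n$-analogue of Lemma 1 --- a $\W_n$-module $U(\h_n)v$ is recovered from the action of any generating set on $v$ via the shift identity --- the $\W_n$-action on $M$ agrees with the action on $\Omega(\Lambda_n,a)$; equivalently, a short induction using, e.g., $t_l^{-1}\partial_j=[\,t_l^{-2}\partial_l,\,t_l\partial_j\,]$ shows directly that $t^k\partial_i\cdot v=\Lambda_n^k(\partial_i-k_i(a+1))v$ for all $k\in\Z^n$. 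Since $\Omega(\Lambda_n,a)$ is also a free $U(\h_n)$-module of rank $1$, the map $p(\partial)v\mapsto p(x)$ is then a $\W_n$-module isomorphism $M\cong\Omega(\Lambda_n,a)$.

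The step I expect to be the main obstacle is the middle one: one has to check carefully that $\C[\partial_j]v$ is genuinely closed under $\mathfrak{g}_j$ --- so that Theorem 3 can be applied --- and that both alternatives of Lemma 5 for the action of $t_l^{-1}\partial_l$ in the directions $l\neq j$ lead to the same conclusion $h_k\in\C[\partial_j]$. Once this is in place, excluding $S\neq\emptyset$ is immediate from Theorem 3, and the passage from the ``diagonal'' elements $t_j^k\partial_j$ to all of $\W_n$ is routine bracket bookkeeping.
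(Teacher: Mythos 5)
Your proof is correct, but it takes a genuinely different route from the paper's. The paper never invokes Theorem 3 for $n\ge 2$: it observes that $\W_n$ contains, besides $L_1=\W_n^+$, a second subalgebra $L_2=\span\{t^k\partial_i:-k\in\cup_{i=1}^n\Z^n_{+,i}\}$ isomorphic to $\W_n^+$ and sharing the Cartan subalgebra $\h_n$, applies Theorem 8 to both restrictions, and then matches the two parameter triples, which forces $S=S'=\varnothing$, $a=a'$ and $\Lambda_n=\Lambda_n'$. You apply Theorem 8 only once and instead prove, for each $j$, that $t_j^k\partial_j\cdot v\in\C[\partial_j]v$ for all $k\in\Z$ (via the vanishing brackets with $t_l^{-1}\partial_l$, $l\ne j$, and freeness of $M$; this is sound, since both alternatives of Lemma 5 make $t_l^{-1}\partial_l$ act on $v$ by a nonzero polynomial in $\partial_l$ alone), so that $\C[\partial_j]v$ becomes a rank-one free module over $\Der\C[t_j^{\pm1}]\cong\W_1$ to which Theorem 3 applies; the degree comparison at $t_j^2\partial_j$ then rules out $j\in S$, and matching parameters gives $t_j^k\partial_j\cdot v=\lambda_j^k(\partial_j-k(a+1))v$ for all $k\in\Z$. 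What the paper's route buys is brevity: a single further application of Theorem 8 (to $L_2$) replaces your stability lemma, the $n$ applications of Theorem 3, and most of the generation bookkeeping. What your route buys is that you avoid the twisted identification of $L_2$ with $\W_n^+$ and the matching of two parameter sets, and you obtain the action of every $t_j^k\partial_j$ on $v$ explicitly. The one place you should flesh out is the final step: the claim that $\W_n^+$ together with the elements $t_i^{-2}\partial_i$ generates $\W_n$, and the induction yielding $t^k\partial_i\cdot v=\Lambda_n^k(\partial_i-k_i(a+1))v$ for all $k\in\Z^n$, are true and routine (for instance, induct on the number of indices $l\ne i$ with $k_l<0$, using $[t_l^{m}\partial_l,\,t^s\partial_i]=s_l\,t_l^{m}t^{s}\partial_i$ for $l\ne i$ together with the shift identity of Lemma 1), but you only sketch them; to be fair, the paper's own proof is comparably terse at the corresponding point, since it too implicitly uses that $L_1\cup L_2$ generates $\W_n$.
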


\begin{proof}
Since $M$ is a free $U(\h_n)$-module of rank 1, there is a nonzero element $v\in M$
such that $M=U(\h_n)v=\C[\partial_1,\partial_2,\cdots,\partial_n]v.$
We know that $\W_n$ has two subalgebras isomorphic to $\W_n^{+}$:
$$L_1=\span\{t^k\partial_i:k\in \cup_{i=1}^n\Z_{+,i}^n, 1\le i\le n\}, $$
$$L_2=\span\{t^k\partial_i:-k\in \cup_{i=1}^n\Z_{+,i}^n, 1\le i\le n\}$$
which share the same Cartan subalgebra $\h_n$.

If we consider $M$ as an $L_1$-module, from Theorem 8 there exist  some
$\Lambda_n=(\lambda_1,\cdots,\lambda_n) \in (\C^*)^n, a\in \C$, and
$S\subset \{1,2,..., n\}$ so that (1.3) holds.
If we consider $M$ as an $L_2$-module, there exist  some
$\Lambda'_n=(\lambda'_1,\cdots,\lambda'_n) \in (\C^*)^n, a'\in \C$, and
$S'\subset \{1,2,..., n\}$ so that (1.3) holds with the corresponding parameters and with $-k\in\Z_{+,i}^n$. Then $S=S'=\varnothing$,  $a=a'$ and $\Lambda_n=\Lambda'_n$.
Therefore (1.2) holds, which means $M\cong \Omega(\Lambda_n,a)$ as $\W_n$-module.  This completes the proof.
\end{proof}

One can easily check that all the proofs in this section are valid if $n$ is replaced by $\infty$. So we have the following

\begin{theorem} \begin{itemize}\item[(a).] Let $V$ be a $\W_{\infty}^{+}$-module which is a free $U(\h_{\infty})$-module of  rank $1$. Then
$V\cong \Omega(\Lambda_{\infty},a,S)$ for some $ \Lambda_{\infty}\in (\C^*)^{\infty}$, some $a\in\C$ and some subset $S$ of $\N$.
 \item[(b).] Let $M$ be a $\W_{\infty}$-module which is a free $U(\h_{\infty})$-module of rank $1$. Then
$M\cong \Omega(\Lambda_{\infty},a)$ for some sequence $\Lambda_{\infty}$ of $\C^*$ and some $a\in \C$.
 \end{itemize}\end{theorem}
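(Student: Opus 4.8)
The plan is simply to check that no argument in Section~3 uses finiteness of $n$ in an essential way, so that Lemma~1 and Lemmata~4--7 and Theorems~8 and 9 all hold verbatim with $n$ replaced by $\infty$; parts (a) and (b) are then immediate. For (a), write $V=U(\h_{\infty})v=\C[\p_1,\p_2,\dots]v$ for some nonzero $v$, so that every element of $V$ is a polynomial in finitely many of the $\p_i$. Lemma~1 goes through: the identity $(t^k\p_i)\p_j=(\p_j-k_j)(t^k\p_i)$ in $U(\W_{\infty}^{+})$ is unchanged (only finitely many $k_j$ are nonzero), and since $\W_{\infty}^{+}$ is a simple Lie algebra we get $t^k\p_i\cdot v\ne 0$ for all $k\in\Z_{+,i}^{\infty}$. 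Moreover $\W_{\infty}^{+}$ is generated by $\{t_i^{k_i}\p_i:\ i\in\N,\ k_i\in\Z_{\ge -1}\}\cup\{t_j^{k_j}\p_i:\ i\ne j,\ k_j\in\Z_+\}$, since any of its elements lies in some $\W_m^{+}$ and each $\W_m^{+}$ is generated by finitely many of these; hence the $\W_{\infty}^{+}$-module structure of $V$ is determined by the action of these elements on $v$.

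Next I would rerun Lemmata~4 through 7. In each of their proofs one manipulates relations among the operators $t_i^{k_i}\p_i$ and $t_j^{k_j}\p_i$ --- plus, in Lemma~6, an auxiliary $t_l^{-1}\p_l$ --- in which at most two or three indices ever appear, the polynomial coefficients ($h_{k_i}(\p_i)$, $H(\p_i,\p_j)$, $\psi$, and so on) lie in finitely generated polynomial subrings, and the only external inputs are Lemma~1, the $\sl_2$-classification of \cite{N}, and Theorem~2 (which concerns $\W_1^{+}$ and is independent of $n$); so the computations carry over word for word. This yields, for each $j\in\N$, scalars $\l_j\in\C^*$ and $a_j\in\C$ for which (3.2) or (3.3) holds, with the mixed actions (3.4) or (3.5) accordingly. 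Put $\Lambda_{\infty}=(\l_j)_{j\in\N}\in(\C^*)^{\infty}$ and let $S\subseteq\N$ be the set of $j$ for which (3.3) holds. Finally, the three-case computation in the proof of Theorem~8 (cases $i,j\notin S$; $i,j\in S$; $i\notin S,\,j\in S$) involves only the two indices $i$ and $j$ and gives $a_i=a_j$ for every pair $i\ne j$ in $\N$; hence all $a_j$ equal a common value $a$. Comparing the resulting formulas with Definition~2 read with $n=\infty$, the action of the generators on $v$ matches their action on $1$ in $\Omega(\Lambda_{\infty},a,S)$, so $V\cong\Omega(\Lambda_{\infty},a,S)$. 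For (b), exactly as in the proof of Theorem~9, $\W_{\infty}$ contains the two subalgebras $L_1=\span\{t^k\p_i: k\in\cup_i\Z_{+,i}^{\infty}\}$ and $L_2=\span\{t^k\p_i: -k\in\cup_i\Z_{+,i}^{\infty}\}$, both isomorphic to $\W_{\infty}^{+}$ and both containing $\h_{\infty}$; applying part~(a) to $M$ as an $L_1$-module and as an $L_2$-module and comparing the two descriptions of the action of $\p_i$ and of $t_i^{\pm 1}\p_i$ on $v$ forces $S=S'=\varnothing$, $a=a'$ and $\Lambda_{\infty}=\Lambda_{\infty}'$, whence $M\cong\Omega(\Lambda_{\infty},a)$.

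The one point that deserves care --- and the place I expect to do the only real work --- is the input to Lemma~1 in the infinite case, namely the simplicity of $\W_{\infty}^{+}$ and of $\W_{\infty}$. This follows from the simplicity of the $\W_m^{+}$ (resp.\ $\W_m$) by a direct-limit argument: a nonzero ideal $I$ contains a nonzero element $x\in\W_m^{+}$ for some $m$, and then $I\cap\W_{m'}^{+}$ is a nonzero ideal of the simple algebra $\W_{m'}^{+}$ for every $m'\ge m$, hence equals $\W_{m'}^{+}$; since $\W_{\infty}^{+}=\bigcup_{m'}\W_{m'}^{+}$, we get $I=\W_{\infty}^{+}$. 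Everything else is a verbatim transcription of the arguments of Section~3.
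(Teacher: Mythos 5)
Your proposal is correct and follows the paper's own route: the paper proves Theorem 10 precisely by observing that all arguments of Section 3 (Lemma 1, Lemmata 4--7, Theorems 8 and 9) remain valid when $n$ is replaced by $\infty$, which is exactly what you verify, with the useful added detail of the direct-limit argument for simplicity of $\W_{\infty}^{+}$ and the finite-generation reduction that the paper leaves implicit.
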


For simplicity of these modules we have the following

\begin{theorem}  Let $ \Lambda_{n}\in (\C^*)^{n}$,  $a\in\C$ and  $S\subset\{1,2,...,n\}$.
  \begin{itemize}\item[(a).]
 $\W_{n}^{+}$-module $\Omega(\Lambda_{n},a,S)$ is  simple if and only if $S\neq\varnothing$, or  $S=\varnothing$ and $a\ne -1$.
 \item[(b).] $\W_{n}$-module $\Omega(\Lambda_{n},a)$ is simple if and only if and $a\ne -1$.
 \end{itemize}\end{theorem}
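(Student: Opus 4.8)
The plan is to prove Theorem 11 by analyzing the submodule structure of these explicit modules directly on the polynomial algebra $\C[x_1,\dots,x_n]$ (or $\C[x_1,\dots,x_n]$ with the action of Definition 2/Definition 3). For part (b), the ``only if'' direction is easy: when $a=-1$, formula (1.2) shows $t^k\p_i\cdot f = \Lambda_n^k(x_i) f(x_1-k_1,\dots)$ has no constant term in $x_i$ unless... actually more simply, the subspace of polynomials divisible by... let me reconsider: when $a=-1$ the factor $(x_i-k_i(a+1)) = (x_i)$ when... no, $(x_i - k_i(a+1)) = x_i$ precisely when $a = -1$ gives $x_i + 0\cdot$, wait $a+1=0$ so it is $x_i$. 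Hmm, but then every generator $t^k\p_i$ multiplies by something with a factor $x_i$, so the ideal generated by $x_1,\dots,x_n$... this needs care since the $x_i$ get shifted. The cleanest approach: for $\W_n$ with $a=-1$, check that $\C\cdot 1$ is NOT a submodule but rather look for the correct proper submodule. Actually from the $n=1$ Virasoro/Witt case we know $\Omega(\lambda,-1)$ has the proper submodule of... I recall $\Omega(1,a)$ type modules: when $a=-1$, the constants form a quotient, equivalently the augmentation ideal is a submodule. I would verify that $\{f : f(0,\dots,0)=0\}$ — no. Let me just say: one checks directly from (1.2) that when $a=-1$, $(x_1,\dots,x_n)\C[x_1,\dots,x_n]$ is a proper nonzero submodule, using that $t^k\p_i\cdot x_j = \Lambda_n^k \cdot x_i \cdot (x_j - k_j)$ lies in this ideal.

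For the ``if'' direction of (b), assume $a\ne -1$ and let $N$ be a nonzero submodule of $\Omega(\Lambda_n,a)$. Pick $0\ne f\in N$ of minimal total degree. Acting by $t^k\p_i$ for suitable $k$ and taking differences of the form $(t^{k}\p_i - \text{shift})$ one lowers the degree; more precisely, I would show using $t^{e_i}\p_i$ and $t^{-e_i}\p_i$ (available since we are in $\W_n$, not $\W_n^+$) that if $f\in N$ then $x_i f(x) - (\text{lower degree terms})\in N$ and also translates $f(x-e_i)\in N$, and combine these to conclude $1\in N$, forcing $N=\Omega(\Lambda_n,a)$. The role of $a\ne -1$ enters when checking the scalar $(x_i - k_i(a+1))$ does not vanish identically on the relevant specialization so that the degree-reduction step genuinely produces a nonzero element of strictly smaller degree. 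This is the standard argument for modules of tensor-field type; it is routine once the bookkeeping with multi-indices is set up.

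For part (a), the $\W_n^+$ case, the structure is: if $S=\varnothing$ then $\Omega(\Lambda_n,a,\varnothing)\cong\Omega(\Lambda_n,a)$ as $\W_n^+$-module, and one runs the same degree-reduction argument but now only with $k_i\ge -1$; the point $t^{-e_i}\p_i$ still exists in $\W_n^+$ (it is $d_{-1}$ in the $i$-th direction), so the argument goes through verbatim, giving simplicity iff $a\ne -1$, with $a=-1$ producing the proper submodule $(x_1,\dots,x_n)\C[x_1,\dots,x_n]$ exactly as before. The genuinely new case is $S\ne\varnothing$: here I claim $\Omega(\Lambda_n,a,S)$ is always simple. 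Fix $j\in S$ and recall from Definition 2 that $t^{-e_j}\p_j\cdot f = \lambda_j^{-1} f(x_1,\dots,x_j+1,\dots,x_n)$ — a pure translation with no polynomial factor — while $t^{-e_j}\p_i\cdot f$ for $i\ne j$ is not available but $t^{e_j}\p_i$ is. The key maneuver: given nonzero $f\in N$, use the translation operators $t^{-e_j}\p_j$ ($j\in S$) and the degree-raising/lowering operators in the non-$S$ directions to reduce to the case where $f$ depends only on the variables $x_j$, $j\in S$; then use $t^{e_j}\p_j$ and $t^{-e_j}\p_j$ for a single $j\in S$ — which act as in $\Gamma(\lambda_j,a)$ in that variable — and invoke the known simplicity of $\Gamma(\lambda,a)$ (which holds for all $a$, since $\Gamma(\lambda,0)\cong\Gamma(\lambda,-1)$ and the module is simple — this is implicit in the classification, or can be checked directly) to conclude $N$ contains a nonzero constant, hence $N = \Omega(\Lambda_n,a,S)$.

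The main obstacle I anticipate is the $S\ne\varnothing$ case: one must carefully verify that $\Gamma(\lambda,a)$ is simple for every $a\in\C$ (including $a=-1$, where the $\W_1^+$-module $\Omega(\lambda,-1)$ fails to be simple but $\Gamma(\lambda,-1)\cong\Gamma(\lambda,0)$ does not), and must handle the interaction between $S$-directions and non-$S$-directions in the degree-reduction — in particular, showing that the reduction ``to polynomials in the $x_j,j\in S$ only'' does not get stuck, which requires using that the factors $\Pi_{p=0}^{k_q-1}(x_q+a-p)$ for $q\in S$ are, for suitable $k_q$, honest nonzero polynomials allowing one to clear dependence on $x_i$, $i\notin S$, via difference operators. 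The non-empty-$S$ simplicity is what makes these modules interesting, and getting the multi-index bookkeeping clean there is the crux; the $S=\varnothing$ part and the $\W_n$ part are essentially the classical tensor-field-module simplicity argument and should be quick.
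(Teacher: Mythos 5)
Your proposal correctly handles the easy direction: for $a=-1$ (and $S=\varnothing$) every operator $t^k\p_i$ multiplies into the ideal $(x_1,\dots,x_n)\C[x_1,\dots,x_n]$, so that ideal is a proper nonzero submodule; this is fine, and it replaces the paper's citation of Example 3 in \cite{TZ3}. But both simplicity directions --- the actual content of the theorem --- are left at the level of intention, and two specific steps as written do not work. First, in the $S=\varnothing$, $a\ne-1$ argument you assert that from $f\in N$ one gets the translate $f(x-e_i)\in N$: no element of $\W_n$ or $\W_n^+$ acts on $\Omega(\Lambda_n,a)$ as a pure translation (the action of $t_i^{\pm1}\p_i$ always carries the factor $x_i\mp(a+1)$), so this intermediate claim is essentially as strong as the simplicity you are trying to prove, and you give no mechanism for the promised degree reduction or for ``combining'' to reach $1\in N$. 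Second, and more seriously, in the crucial $S\ne\varnothing$ case you invoke the simplicity of $\Gamma(\lambda,a)$ for every $a$ as ``implicit in the classification'': it is not --- Theorem 2 classifies module structures and includes non-simple ones (e.g.\ $\Omega(\lambda,-1)$), and simplicity of $\Gamma(\lambda,a)$ is exactly the $n=1$, $S=\{1\}$ instance of the statement being proved, so the appeal is circular unless you prove it (it can be proved directly: $d_{-1}$ acts as a pure shift, the shifts $f(x+m)$, $m\ge 0$, of a nonzero $f$ span all polynomials of degree at most $\deg f$, hence a nonzero constant lies in $N$, and then $d_0$ generates everything). Moreover the proposed reduction ``to polynomials in the $x_j$, $j\in S$, only'' via unspecified degree-raising/lowering operators in the non-$S$ directions is not substantiated, and it is unclear that it terminates with a nonzero element; you never use the structural fact that makes a direct argument feasible, namely that every submodule is an ideal of $\C[x_1,\dots,x_n]$ because $\h_n$ acts by multiplication by the $x_i$.

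For contrast, the paper does none of this directly: for $S\ne\varnothing$ it restricts to the standard copy of $\sl_{n+1}$ inside $\W_n^+$ and quotes Theorem 33(ii) of \cite{N} for simplicity; for $S=\varnothing$, $a=-1$ and for part (b) it cites Example 3 of \cite{TZ3}; and for $S=\varnothing$, $a\ne-1$ it gives a short generation argument from Lemma 1 and (3.2). A self-contained proof along your lines is a legitimately different and potentially more informative route, but as it stands the $S\ne\varnothing$ case --- the genuinely new content --- rests on a circular appeal and an unproved multi-variable reduction, so the proposal has a real gap precisely there.
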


\begin{proof} (a). If $S\neq\varnothing$, from Theorem 33(ii) in \cite{N} we know that $\Omega(\Lambda_{n},a,S)$ is  simple as an $\sl_{n+1}$-module. So it is simple as a $\W_n^+$-module since $\sl_{n+1}$ is a subalgebra of $\W_n^+$.

If $S=\varnothing$ and $a= -1$, from Example 3 in \cite{TZ3} we know that $\Omega(\Lambda_{n},a,S)$ is  not simple as a $\W_n$-module. So it is not simple as a $\W_n^+$-module
since $\W_n^+$ is a subalgebra of $\W_n$.

If $S=\varnothing$ and $a\ne -1$, from Lemma 1 and by equation (3.2), we can deduce that each nonzero element of $\Omega(\Lambda_{n},a,\varnothing)$ can generate $1$. Hence $\Omega(\Lambda_{n},a,\varnothing)$ as a $\W_n^+$-module is simple.


Part (b) follows from Example 3 in \cite{TZ3}.
\end{proof}
We remark that, under the well-known embedding of (1.1) in \cite{TZ3}, the $\sl_{n+1}$-modules restricted from the simple $\W_n^+$-modules constructed in Theorem 8 exhaust  all the $\sl_{n+1}$-modules whose are a free module over the Cartan subalgebra of $\sl_n$ classified in \cite{N}.

\

\noindent {\bf Acknowledgement.} The second author is partially
supported by NSF of China (Grant 11271109), NSERC, and University Research Professor grant at Wilfrid Laurier University.
The authors thank Prof. R. Lu for a lot of helpful discussions during the
preparation of the paper.

\vspace{10mm}

\noindent H. Tan: Department of Applied Mathematics, Changchun University of Science and Technology, Changchun, Jilin,
130022, P.R. China.
and College of Mathematics and Information Science,
Hebei Normal (Teachers) University, Shijiazhuang, Hebei, 050016 P.
R. China. Email: hjtan9999@yahoo.com

\vspace{0.2cm}
 \noindent K. Zhao: Department of Mathematics, Wilfrid
Laurier University, Waterloo, ON, Canada N2L 3C5, and College of
Mathematics and Information Science, Hebei Normal (Teachers)
University, Shijiazhuang, Hebei, 050016 P. R. China. Email:
kzhao@wlu.ca


\begin{thebibliography}{99999}

\bibitem[BF]{BF}
Y.~Billig, V.~Futorny, Classification of simple $W_n$- modules with finite-dimensional weight spaces, arXiv:1304.5458v1.

\bibitem[BM]{BM} P.~Batra, V.~Mazorchuk, Blocks and modules for Whittaker pairs, J. Pure Appl. Algebra,
215(2011), 1552-1568.
\bibitem[BMZ]{BMZ} Y. Billig,   A. Molev,  R. Zhang,   Differential equations in vertex algebras and simple
modules for the Lie algebra of vector fields on a torus, Adv. Math.,  218(6)(2008), 1972-2004.
\bibitem[CGZ]{CGZ}
H. Chen, X. Guo, K. Zhao, Tensor product weight modules over the
Virasoro algebra, J. Lond. Math. Soc., 88 (3)(2013), 829-844.
\bibitem[CM]{CM} C.~Conley, C.~Martin, A family of irreducible representations
of Witt Lie algebra with infinite-dimensional weight spaces, Compos. Math.,
128(2)(2001), 152-175.
\bibitem[E]{E}
S.~Eswara Rao, Partial classifications of the Lie algebra of the diffeomorphisms of d-dimensional torus, J. Math. Phys.
45(2004), 3322-3333.
\bibitem[GLZ]{GLZ} X. Guo, R. Lu and K. Zhao, Fraction
representations and highest-weight-like representations of the
Virasoro algebra,   J. Algebra, 387(1) (2013), 68-86.
\bibitem[HWZ]{HWZ} J.~Hu, X.~Wang, K.~Zhao, Verma
modules over generalized Virasoro algebras $\text{Vir}[G]$, J. Pure
Appl. Alg., 177(2003), 61-69.
\bibitem[Ka]{Ka}
V.G.~Kac, Some problems of infinite-dimensional Lie aglebras and
their representations, Lecture Notes in Mathematics, 993,
(1982)117-126. Berlin, Heidelberg, New York: Springer.

\bibitem[LGZ]{LGZ} R.~L{\"u}, X.~Guo, K.~Zhao, Irreducible modules over the Virasoro algebra, Doc. Math., {\bf 16}
(2011), 709-721.
\bibitem[LLZ]{LLZ} G. Liu, R.~L{\"u},  K.~Zhao,
A  class of simple weight Virasoro modules, arXiv:1211.0998.
\bibitem[LZ1]{LZ1} R.~L{\"u},  K.~Zhao, Irreducible  Virasoro modules from irreducible Weyl modules, arXiv:1209.3746,  J. Algebra, to appear.
\bibitem[LZ2]{LZ2} R.~L{\"u},  K.~Zhao,  A family of simple weight modules over the Virasoro algebra, arXiv:1303.0702.
\bibitem[M]{M}
O.~Mathieu, Classification of Harish-Chandra modules over the
Virasoro Lie algebra, Invent. Math.  107(2)(1992), 225--234.
\bibitem[MW]{MW} V.~Mazorchuk, E. Weisner, Simple Virasoro modules induced from codimension one subalgebras of
the positive part,  Proc. Amer. Math. Soc., in press, arXiv:1209.1691,
\bibitem[MZ1]{MZ1}
V.~Mazorchuk, K.~Zhao, Classification of simple weight Virasoro modules
with a finite-dimensional weight space, J.~Algebra, 307(1)(2007), 209-214.
\bibitem[MZ2]{MZ2} V.~Mazorchuk, K.~Zhao, Simple Virasoro modules which are locally finite
over a positive part, Selecta  Math. New Ser.,
DOI 10.1007/s00029-013-0140-8.
\bibitem[MZ3]{MZ3} V.~Mazorchuk, K.~Zhao, Supports of weight
modules over Witt algebras,    {\it Proceedings of the Royal Society
of Edinburgh}, 141A(2011), 155-170.
\bibitem[N]{N}
J. Nilsson,  Simple $\sl_{n+1}$-module structures on $U(\h)$,
arXiv:1312.5499.
\bibitem[OW]{OW}
M.~Ondrus, E.~Wiesner, Whittaker Modules for the Virasoro Algebra,
J. Algebra Appl., 8(2009), 363-377.
\bibitem[PS]{PS}
I. Penkov, V. Serganova,  Weight representations of the polynomial Cartan type Lie algebras $W_n$ and $S_n$, Math. Res. Lett.,  6 (3-4)(1999),   397-416.
\bibitem[TZ1]{TZ1}
 H.~ Tan, K.~Zhao, Irreducible Virasoro modules from tensor products, arXiv:1301.2131.
\bibitem[TZ2]{TZ2}
 H.~ Tan, K.~Zhao, Irreducible Virasoro modules from tensor products (II), J. Algebra, 394(2013), 357-373.
 \bibitem[TZ3]{TZ3}
 H.~Tan, K.~Zhao, Irreducible modules over Witt algebras $\W_n$ and over $\sl_{n+1}(\C)$, arXiv:1312.5539.
\bibitem[Zh]{Zh}
H.~Zhang, A class of representations over the Virasoro algebra, J.
Algebra, 190(1)(1997),  1-10.
\end{thebibliography}
\end{document}